\theoremstyle{plain}
  \newtheorem{thm}{Theorem}[section]
  \newtheorem{lem}[thm]{Lemma}
  \newtheorem{prop}[thm]{Proposition}
	\newtheorem*{thm*a}{Theorem A}
	\newtheorem*{thm*b}{Theorem B}
\theoremstyle{definition}
  \newtheorem{defn}[thm]{Definition}
    \newtheorem{example}[thm]{Example}
  \newtheorem{rmk}[thm]{Remark}
  \newtheorem*{ack*}{Acknowledgement}
  \newtheorem*{ques*}{Question}
\theoremstyle{plain}
\numberwithin{equation}{section}
\newcommand\pl{\partial}
\newcommand\bpl{\bar{\partial}}
\newcommand\oh{\frac{1}{2}}
\newcommand\dd{{\mathrm d}}
\newcommand\w{\wedge}
\newcommand\sm{\sigma}
\newcommand\dt{\delta}
\newcommand\vep{\varepsilon}
\newcommand\vph{\varphi}
\newcommand\om{\omega}
\newcommand\ta{\theta}
\newcommand\gm{\gamma}
\newcommand\kp{\kappa}
\newcommand\af{\alpha}
\newcommand\bt{\beta}
\newcommand\ld{\lambda}
\newcommand\Ta{\Theta}
\newcommand\BC{\mathbb{C}}
\newcommand\BR{\mathbb{R}}
\newcommand\BZ{\mathbb{Z}}
\newcommand\FF{\mathfrak{F}}
\newcommand\td{\tilde}
\newcommand\bfI{\mathbf{I}}
\DeclareMathOperator{\re}{Re}
\DeclareMathOperator{\im}{Im}
\begin{document}

\title{An ansatz for constructing  explicit solutions of \\ Hessian equations}
\subjclass{Primary 58E15, Secondary 32W50, 53D12, 35J60}

\author{Chung-Jun Tsai}
\address{Department of Mathematics, National Taiwan University, and National Center for Theoretical Sciences, Math Division, Taipei 10617, Taiwan}
\email{cjtsai@ntu.edu.tw}

\author{Mao-Pei Tsui}
\address{Department of Mathematics, National Taiwan University, and National Center for Theoretical Sciences, Math Division, Taipei 10617, Taiwan}
\email{maopei@math.ntu.edu.tw}

\author{Mu-Tao Wang}
\address{Department of Mathematics, Columbia University, New York, NY 10027, USA}
\email{mtwang@math.columbia.edu}


\thanks{C.-J.~Tsai is supported in part by the National Science and Technology Council grant 112-2628-M-002-004-MY4.  M.-P.~Tsui is supported in part by the National Science and Technology Council grant 112-2115-M-002-015-MY3 and 113-2918-I-002-004. M.-T. ~Wang is supported in part by the National Science Foundation under Grants DMS-2104212 and DMS-2404945, and by the Simons Foundation through the Simons Fellowship SFI-MPS-SFM-00006056. Part of this work was carried out when M.-T.~Wang was visiting the Institute of Mathematics, Academia Sinica.}

\begin{abstract} We introduce a (variation of quadrics) ansatz for constructing explicit, real-valued solutions to broad classes of complex Hessian equations on domains in $\mathbb{C}^{n+1}$
 and real Hessian equations on domains in $\mathbb{R}^{n+1}$.
In the complex setting, our method simultaneously addresses the deformed Hermitian--Yang--Mills/Leung--Yau--Zaslow (dHYM/LYZ) equation, the Monge--Amp\`{e}re equation, and the $J$-equation. Under this ansatz each PDE reduces to a second-order system of ordinary differential equations admitting explicit first integrals. These ODE systems integrate in closed form via abelian integrals, producing wide families of explicit solutions together with a detailed description. In particular, on $\mathbb{C}^{n+1}$,
 we construct entire dHYM/LYZ solutions of arbitrary subcritical phase, and on $\mathbb{R}^{n+1}$ we produce entire special Lagrangian solutions of arbitrary subcritical phase. Some of these solutions develop singularities on compact regions.  In the special Lagrangian case we show that, after a natural extension across the singular locus, these blow-up solutions coincide with previously known complete special Lagrangian submanifolds obtained via a different ansatz.
\end{abstract}

\maketitle


\section{Introduction}

Let $X\subset\mathbb{C}^{n+1}$ be a domain and let $u\in C^2(X)$ be a real-valued function.  We study the complex Hessian equation:
\begin{align}
    c_n\sm_{n+1}(\partial\bar{\partial}u) + c_{n-1}\sm_n(\partial\bar{\partial}u) + \cdots + c_0\sm_1( \partial\bar{\partial}u) + c_{-1} &= 0
\label{complex H_eqn} \end{align}
where $c_{-1}, c_0,\cdots, c_n$ are real constants, and $\sigma_k(\partial\bar{\partial} u), k=1,\cdots, n+1$ are the $k$-th symmetric functions of the complex Hessian of $u$. The coefficients $c_{-1}, c_0,\cdots, c_n$ of \eqref{complex H_eqn} determine two polynomials $F$ and $G$ in the variables $p_1, \cdots, p_n$:
\begin{equation}
    F(p_1, \cdots, p_n)= \sum_{k=0}^n c_k\sm_k(p) \quad\text{and}\quad G(p_1,\ldots,p_n) = \sum_{k=0}^n c_{k-1}\sm_k(p)
\end{equation} where $\sigma_k (p), k=0,\cdots n$ are the $k$-the symmetric functions of $p_i, i=1,\cdots,n$. 

By introducing a suitable ansatz, we reduce the PDE \eqref{complex H_eqn} to a system of second-order ordinary differential equations determined by $F$ and $G$. Concretely, if $p_i(s)$, $i=1,\dots,n$ are $C^2$ functions of $s$ that satisfy the ODE system
\begin{align}\label{ODE_introduction} \begin{split}
    \frac{p_i''}{2}\cdot F(p_1,\ldots,p_n) & = (p_i')^2 \frac{\pl F}{\pl p_i}(p_1,\ldots,p_n)  \quad\text{for $i=1,\ldots,n$ } ~,  \\
    r''\cdot F(p_1,\ldots,p_n) & = -G(p_1,\ldots,p_n) ~,
\end{split} \end{align}
then the function
$$
u(z_1,\dots,z_{n+1}) = 2\sum_{j=1}^n p_j(\re z_{n+1})\,(\re z_j)^2 + 4\,r(\re z_{n+1})
$$
solves the complex Hessian equation \eqref{complex H_eqn} (see Proposition \ref{general_ODE}).  Here $(z_1,\cdots, z_{n+1})$ are standard complex coordinates on $\mathbb{C}^{n+1}$ and $\re{z}_i$ denotes the real part of $z_i, i=1\cdots n+1$ .

The same ansatz—and the very same reduction to an ODE system—applies to the corresponding real Hessian equation on $\mathbb{R}^{n+1}$, i.e. \[f(x_1,\ldots, x_n, x_{n+1})=\frac{1}{2}\sum_{j=1}^n p_j(x_{n+1})\,x_j^2 + r(x_{n+1})\] satisfies the equation 
\begin{align}
    c_n\sm_{n+1}(\nabla^2 f) + c_{n-1}\sm_n(\nabla^2 f) + \cdots + c_0\sm_1(\nabla^2 f) + c_{-1} &= 0
\label{real H_eqn} \end{align} where $\sigma_k(\nabla^2 f), k=1,\cdots, n+1$ are the $k$-th symmetric functions of the real Hessian of $f$.

We then focus on Hessian equations whose coefficients $c_0,\cdots, c_n$ satisfy a \emph{recursive relation}. 
\begin{defn}
    Let $a_0$ and $a_1$ be real numbers. The complex Hessian equation \eqref{complex H_eqn}/real Hessian equation \eqref{real H_eqn} is said to be of {\emph recursive type} $(a_0, a_1)$ if the coefficients $c_0,c_1,\cdots c_n$ satisfy the recursive relation:
    \begin{align*}
         c_{k-1} = a_1 c_k  - a_0 c_{k+1} \quad \text{for } k=1, \dots, n-1
    \end{align*}
\end{defn}
In particular, the coefficients $c_0,\cdots, c_n$ are determined by $a_0, a_1, c_{n-1}$ and $c_{n}$. Many classical nonlinear PDEs lie in this class—including the deformed Hermitian--Yang--Mills/Leung--Yau--Zaslow (dHYM/LYZ) equation, the real and complex Monge--Ampère equation, the $J$-equation, and the special Lagrangian equation---so that one may recover each by choosing the appropriate recursive relation (see Proposition \ref{classification} for a complete classification of recursive-type equations).

For any such \emph{recursive-type} equation, we show that the associated second-order ODE system is \emph{completely integrable}: in particular, it admits enough first integrals to reduce the dynamics to quadratures.
\begin{thm} \label{thm_intro1}
Suppose \eqref{complex H_eqn}/\eqref{real H_eqn} is of recursive type $(a_0, a_1)$. Then its associated 2nd order ODE system \eqref{ODE_introduction} is completely integrable. In fact, denoting
\begin{align*}
    \xi_i = \frac{p_i^2 + a_1 p_i + a_0}{p'_i}, \quad i=1, \dots, n ~,
\end{align*}
then \begin{align*} \frac{F^2}{\prod_{i=1}^n p_i'} \quad and \quad
    \xi_i - \xi_1, \quad i=2, \dots, n
\end{align*}
are first integrals of the system.  Moreover, $\xi_i,i=1\cdots n$ satisfy the following ODE system:
\begin{align*}
    (\xi'_i)^2=k_1+ k_2 \prod_{j=1}^n\xi_j
\end{align*}
for explicit real constants $k_1,k_2$ depending only on $a_0, a_1, c_{n-1}$ and $c_{n}$.
\end{thm}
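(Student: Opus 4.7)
The plan is to verify the three claims in sequence, exploiting a structural factorization of $F$ forced by the recursion on $(c_k)$. The recursion $a_0 c_{k+1} - a_1 c_k + c_{k-1} = 0$ has characteristic polynomial $a_0 x^2 - a_1 x + 1$, whose reciprocal roots $\alpha,\beta$ satisfy $\alpha+\beta = a_1$, $\alpha\beta = a_0$. Hence $c_k = A\alpha^{-k} + B\beta^{-k}$ with $A, B$ determined by $c_{n-1}, c_n$. Using the identity $\sum_{k=0}^n \gamma^{-k}\sm_k(p) = \gamma^{-n}\prod_j(\gamma+p_j)$, this yields the key factorization
\[
F(p_1,\ldots,p_n) = \td A\,\Phi_\alpha + \td B\,\Phi_\beta, \qquad \Phi_\gamma := \prod_{j=1}^n (\gamma + p_j),
\]
with $\td A + \td B = c_n$ and $\alpha \td A + \beta \td B = c_{n-1}$. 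Setting $Q(p) := p^2 + a_1 p + a_0 = (p+\alpha)(p+\beta)$, one has $\xi_i = Q(p_i)/p_i'$.

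First I would verify that $F^2/\prod_i p_i'$ is conserved: writing the ODE as $p_i'' = 2(p_i')^2 (\partial F/\partial p_i)/F$, one gets $\sum_i p_i''/p_i' = 2\sum_i p_i'(\partial F/\partial p_i)/F = 2F'/F$, so $(d/ds)\log(F^2/\prod_i p_i') = 0$. Next, to see that $\xi_i - \xi_1$ is a first integral, I would compute $\xi_i' = (2p_i + a_1) - 2Q(p_i)(\partial F/\partial p_i)/F$ and use $\partial F/\partial p_i = \td A\, \Phi_\alpha/(\alpha + p_i) + \td B\, \Phi_\beta/(\beta + p_i)$ to rewrite
\[
Q(p_i)\,\frac{\partial F}{\partial p_i} = (\beta + p_i) \td A \Phi_\alpha + (\alpha + p_i) \td B \Phi_\beta = p_i F + \bigl(\beta \td A \Phi_\alpha + \alpha \td B \Phi_\beta\bigr),
\]
giving $\xi_i' = a_1 - 2(\beta \td A \Phi_\alpha + \alpha \td B \Phi_\beta)/F$, which is manifestly independent of $i$.

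Finally, setting $\xi' := \xi_i'$ and using $a_1 = \alpha + \beta$, one obtains $F\xi' = (\alpha - \beta)(\td A \Phi_\alpha - \td B \Phi_\beta)$; squaring via $(x-y)^2 = (x+y)^2 - 4xy$ yields $F^2 (\xi')^2 = (\alpha - \beta)^2 \bigl(F^2 - 4\td A \td B \Phi_\alpha \Phi_\beta\bigr)$. Since $\Phi_\alpha \Phi_\beta = \prod_j Q(p_j) = (\prod_j \xi_j)(\prod_j p_j')$ and $K := F^2/\prod_j p_j'$ is constant by the first step, we have $\Phi_\alpha \Phi_\beta/F^2 = (\prod_j \xi_j)/K$, whence $(\xi_i')^2 = k_1 + k_2 \prod_j \xi_j$ with $k_1 = a_1^2 - 4a_0$ and $k_2 = -4(a_1^2 - 4a_0)\td A \td B/K$; solving $\td A + \td B = c_n$, $\alpha \td A + \beta \td B = c_{n-1}$ gives $\td A \td B = (a_1 c_{n-1} c_n - c_{n-1}^2 - a_0 c_n^2)/(a_1^2 - 4a_0)$, so the constants are explicit in $a_0, a_1, c_{n-1}, c_n$ (together with the value of the first integral $K$). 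The principal obstacle is the structural factorization $F = \td A \Phi_\alpha + \td B \Phi_\beta$ forced by the recursion; once in place, the remainder reduces to short algebraic manipulations, and the degenerate case $a_1^2 = 4a_0$ is handled by a limiting argument.
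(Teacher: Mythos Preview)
Your proposal is correct and follows essentially the same approach as the paper: both arguments hinge on the factorization $F=\td A\,\Phi_\alpha+\td B\,\Phi_\beta$ coming from the roots of $r^2-a_1r+a_0$ (the paper's Proposition~\ref{classification}), and your squaring step is exactly the paper's Case~1 computation in Lemma~\ref{quadratic_lemma}. The only cosmetic difference is that the paper first proves the $i$-independence of $\xi_i'$ via a direct symmetric-function identity (\eqref{structure_id_0}) before invoking the factorization, whereas you use the factorization from the outset; your handling of the degenerate case $a_1^2=4a_0$ by a limit is adequate, though the paper does it by explicit case analysis.
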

{When $n=3$, it turns out that the Hamiltonian system \eqref{ODE_introduction} is always completely integrable, even if its associated Hessian PDE is not of recursive type.  More precisely, when $n=3$, all ``variations of quadrics solutions" to the constant-coefficient Hessian equations \eqref{complex H_eqn}/\eqref{real H_eqn} can be completely classified and always admit three independent first integrals.}

Theorem~\ref{thm_intro1} can be used to construct non-polynomial entire solutions for both dHYM/LYZ equation on $\BC^{n+1}$ and special Lagrangian equation on $\BR^{n+1}$, when $n\geq 3$.  The equations are of recursive type $(a_0=1, a_1=0)$ and take the following form:
\begin{align*}
    \cos\ta(\sm_1 - \sm_3 + \cdots + (-1)^{k-1}\sm_{2k-1} + \cdots) - \sin\ta(1 - \sm_2 + \cdots + (-1)^{k-1}\sm_{2k-2} + \cdots) &= 0
\end{align*}
for some $\ta\in\BR$.  Note that $\ta$ and $\ta + \pi$ give equivalent equations.  On the other hand, for a $C^2$ function $u$ defined on a domain of $\BC^{n+1}$, one can consider $\Ta = \sum_{j=1}^{n+1}\arctan\ld_j$, where $\ld_j$'s are eigenvalues of $\pl\bpl u$.  This real-valued function $\Ta$ is said to be the \emph{phase of $u$}.  If $u$ solves the dHYM/LYZ equation, $\Ta$ is a constant, and $\Ta - \ta \in \BZ\pi$.  However, for a priori estimate and relevant PDE techniques, the value of $\Ta$ matters.  If $|\Ta| = (n-1)\frac{\pi}{2}$, the function is said to be of \emph{critical phase}; the range $|\Ta| > (n-1)\frac{\pi}{2}$ is called \emph{supercritical phase}; the range $|\Ta| < (n-1)\frac{\pi}{2}$ is referred as the \emph{subcritical phase}.  Known results are primarily concentrated in the critical and supercritical phases; see \cites{Jacob-Yau, CJY20, CS22, CLT24, Lin23, Lin23a} for dHYM/LYZ equation and \cites{Yuan06, WY14, Yuan20} for special Lagrangian equations. For the relation between these conditions and a priori estimates of Hessian equations, we refer to \cites{Krylov82, Evans82, CNS85}.  We apply Theorem \ref{thm_intro1} to subcritical, entire solutions to both dHYM/LYZ equation and special Lagrangian equation.

\begin{thm}[Theorem~\ref{thm_entiren_sub} and Theorem~\ref{thm_slag_entire}]
    For any integer $n\geq 3$ and any subcritical phase $\Ta$,
    \begin{itemize}
        \item there exist subcritical, non-polynomial entire solutions to the dHYM/LYZ equation on $\BC^{n+1}$ with phase $\Ta$;
        \item there exist subcritical, non-polynomial entire solutions to the special Lagrangian equation on $\BR^{n+1}$ with phase $\Ta$.
    \end{itemize}
\end{thm}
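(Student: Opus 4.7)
The plan is to apply Theorem~\ref{thm_intro1} to equations of recursive type $(a_0,a_1)=(1,0)$, a class that contains both the dHYM/LYZ and special Lagrangian equations with any phase $\Ta$.  Because the variation-of-quadrics ansatz reduces both PDEs to the \emph{same} ODE system, a single ODE-level construction yields both statements simultaneously; the dHYM/LYZ solution $u$ on $\BC^{n+1}$ and the special Lagrangian solution $f$ on $\BR^{n+1}$ are then read off from the same $(p_1,\ldots,p_n,r)$ via the formulas in the introduction, and the goal is to produce $p_i,r\in C^\infty(\BR)$ such that the resulting $u$ (resp.\ $f$) is non-polynomial, with constant phase equal to the prescribed $\Ta$.

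With $\xi_i=(p_i^2+1)/p_i'$, Theorem~\ref{thm_intro1} provides the first integrals $d_i:=\xi_i-\xi_1$ (constants, $d_1=0$) and reduces the dynamics to the scalar ODE
\[
    (\xi_1')^2 \;=\; Q(\xi_1), \qquad Q(t):=k_1+k_2\prod_{j=1}^n(t+d_j),
\]
depending on the $n+1$ parameters $(k_1,k_2,d_2,\ldots,d_n)$. \emph{First}, I would analyze the phase portrait of this ODE and select a parameter regime where it admits a maximally extended orbit $\xi_1(s)$.  Since $\deg Q=n\geq 3$, a generic orbit reaches infinity in finite $s$-time, because $\int d\xi_1/\sqrt{Q(\xi_1)}$ converges near $\infty$; the key observation is that the reciprocal variable $\eta=1/\xi_1$ satisfies a companion ODE that is smooth through those blow-up times (for $n=3$ this is immediate from $(\eta')^2=k_2\eta+O(\eta^2)$, and for larger $n$ a tailored power substitution plays the same role), allowing $\xi_1$ to be continued to a globally defined function on $\BR$ with at most isolated poles.  \emph{Second}, recover $p_i$ via $\arctan p_i(s)=\arctan p_i(0)+\int_0^s dt/(\xi_1(t)+d_i)$; at each pole of $\xi_1$, the integrand vanishes to sufficient order, matching $p_i'\to 0$ when $\xi_i\to\infty$, so $p_i$ extends smoothly and stays bounded.  \emph{Third}, obtain $r(s)$ by integrating the last equation of \eqref{ODE_introduction}, which remains smooth along the orbit.

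The main obstacles are (i) ensuring $\arctan p_i$ never exits $(-\pi/2,\pi/2)$—so that $p_i$ stays finite on all of $\BR$—which amounts to a quantitative bound on $\int dt/(\xi_1+d_i)$ along the full orbit; and (ii) demonstrating that the constant phase $\Ta=\sum_j\arctan p_j+\arctan r''$ sweeps the entire interval $[-\pi/2,\pi/2]$ as the $n+1$ free parameters vary.  For (i), I would express the relevant quantities as period integrals on the hyperelliptic curve $\{w^2=Q(\xi_1)\}$ and control them through the abelian-integral reduction announced in the abstract.  For (ii), a continuous one-parameter family of entire solutions together with an intermediate-value argument should suffice, with the endpoints pinned by explicit degenerations (for instance a $\Ta=0$ benchmark built from the $n=3$ case appearing earlier in the paper, and $\Ta\to\pm\pi/2$ via an appropriate rescaling/limit).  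Non-polynomiality is then automatic: the $p_i$ produced are bounded and non-constant on $\BR$, whereas any polynomial solution would force $(\xi_1,\ldots,\xi_n)$ into a rational form incompatible with a non-degenerate $Q$.
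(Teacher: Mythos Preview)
Your plan contains a genuine gap: the full quadratic ansatz with all of $p_1,\ldots,p_n$ cannot produce entire solutions when $n\geq 3$, and your proposed continuation through the poles of $\xi_1$ does not rescue it.  Concretely, combining \eqref{xi_der} and \eqref{ODE_r} gives $r'' = -\tfrac12\,\xi_1'$, so that $r'(s) = -\tfrac12\,\xi_1(s) + C$.  Hence whenever $\xi_1$ blows up in finite $s$-time (which, as you correctly note, is forced for $n\geq 3$ because $\int d\xi_1/\sqrt{Q(\xi_1)}$ converges at infinity), $r'$ blows up as well, and the potential $u$ fails to be even $C^1$ across that time.  Equivalently, at the moment $\xi_j\to\infty$ one has $p_j'\to 0$ for every $j$, and the conservation law $F_\ta^{\,2}=\kp\,\prod_j p_j'$ forces $F_\ta\to 0$; but then $r'' = -F_{\ta+\pi/2}/F_\ta$ diverges since $F_\ta^{\,2}+F_{\ta+\pi/2}^{\,2}=\prod_j(1+p_j^{\,2})>0$.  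So the ``smooth passage through $\eta=1/\xi_1=0$'' works for $\xi_1$ alone but not for the full system: $r$ cannot be continued.  This is exactly the content of the paper's observation (the subsection on limits of $s$ when $n\geq 3$) that the pure variation-of-quadrics ansatz admits no non-constant entire solutions once $n\geq 3$.

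The paper sidesteps this obstruction by \emph{changing the ansatz}: for $n\geq 3$ it keeps only two genuine quadratic directions $p_1,p_2$ and replaces the remaining ones by terms linear in $x_j$, namely
\[
\td u = 2p_1(s)(x_1)^2 + 2p_2(s)(x_2)^2 + 4\sum_{j=3}^n q_j(s)\,x_j + 4r(s),
\]
following an idea of Li.  The LYZ equation then reduces to the $n=2$ system for $(p_1,p_2)$ (which \emph{does} admit entire solutions---explicitly in hyperbolic functions, since for $n=2$ the integral $\int d\xi_1/\sqrt{Q(\xi_1)}$ diverges), together with $q_j''=0$ and a modified equation for $r$.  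The phase comes out as $\psi_1+\psi_2$, and varying the free constants $\af,\bt,\psi_1,\psi_2$ subject to \eqref{n2angle1} sweeps the whole interval $[-\tfrac{\pi}{2},\tfrac{\pi}{2}]$ directly, with no intermediate-value argument needed.  The special Lagrangian statement then follows from the identical ODE reduction (Proposition~\ref{prop_correspondence} and Remark~\ref{rmk_with_linear}).  In short, the missing idea is not a sharper analysis of the degree-$n$ hyperelliptic ODE, but a different ansatz that collapses the problem to the integrable $n=2$ case.
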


Entire solutions to the special Lagrangian equation on $\BR^3$ were previously constructed by Warren \cites{Warren16a, Warren16b} (with phase ${\pi}/{2}$) and by Li \cite{Li21} (with phase $0$).

The non-entire solutions produced by Theorem \ref{thm_intro1} develop singularities on compact regions.  In the special Lagrangian case we show in Section ~\ref{sec_sLag} that, after a natural extension across the singular locus, these blow-up solutions coincide with previously known complete special Lagrangian submanifolds obtained via a different ansatz studied by Harvey-Lawson~\cite{Harvey-Lawson}, Lawlor~\cite{Lawlor}, and Joyce~\cite{Joyce}.

 Section~\ref{sec_LYZ} is devoted to the dHYM/LYZ equation. In Section~\ref{sec_entire}, we investigate entire solutions to the dHYM/LYZ equation on $\mathbb{C}^{n+1}$. In Section~\ref{sec_sLag}, we turn to the special Lagrangian equation, demonstrating that the blow-up solutions obtained earlier can be extended to complete special Lagrangian submanifolds. In Section~\ref{sec_general}, we deal with general equations of recursive type, {and classify equations of non-recursive type when $n=3$}. The appendix contains two important calculation lemmas.

\section{The deformed Hermitian--Yang--Mills/Leung--Yau--Zaslow Equation} \label{sec_LYZ}
The Leung--Yau--Zaslow (LYZ) equation, also known as the deformed Hermitian--Yang--Mills (dHYM) equation in the literature (see Collins--Xie--Yau \cites{Collins-Xie-Yau, Jacob-Yau})  is a fully nonlinear partial differential equation. It governs a Hermitian metric on a line bundle over a K\"ahler manifold, or more generally for a real $(1,1)$-form. Suppose $(X,\omega)$ is a K\"ahler manifold and $[\alpha] \in H^{1,1}(X,\mathbb{R})$ is a $(1, 1)$ class. The case of a line bundle consists of setting $[\alpha] = c_1(L)$ where $c_1(L)$ is the first Chern class of a holomorphic line bundle $L \to X$. Suppose that the complex dimension of $X$ is $n+1$ and consider the topological constant
\[
\hat{z}([\omega],[\alpha]) = \int_X (\omega + i\alpha)^{n+1}.
\]
Notice that $\hat{z}$ depends only on the class of $\omega$ and $\alpha$. Suppose that $\hat{z} \neq 0$. Then this is a complex number
\[
\hat{z}([\omega],[\alpha]) = re^{i\theta}
\]
for some real $r>0$ and angle $\theta \in (-\pi,\pi]$ which is uniquely determined.

Fix a smooth representative differential form $\alpha$ in the class $[\alpha]$. For a smooth function $u : X \to \mathbb{R}$, the dHYM/LYZ equation for $(X,\omega)$ with respect to $[\alpha]$ is
\[
\begin{cases}
\operatorname{Im} (e^{-i\theta} (\omega + i(\alpha+\frac{i}{2}\partial\bar{\partial}u))^{n+1}) = 0 \\
\operatorname{Re} (e^{-i\theta} (\omega + i(\alpha+\frac{i}{2}\partial\bar{\partial}u))^{n+1}) > 0.
\end{cases}
\]


Take $X$ to be a domain of $\BC^{n+1}$, $\alpha=0$,  and $\omega=\frac{i}{2}\sum_{j=1}^{n+1} dz_j\wedge d\bar{z}_k$ for the standard complex coordinates $z_1,\ldots, z_{n+1}$ of $\BC^{n+1}$, the LYZ equation for $u: X\rightarrow \BR$ becomes 
\begin{align} \label{LYZ} &\im\Bigl( e^{-i\ta}\det\Bigl(\bfI_{n+1} + i\Bigl[\frac{\partial^2 u}{\partial{z_j}\partial{\bar{z}_k}}\Bigr]_{1\leq j, k\leq n+1} \Bigr)\Bigr) =0 ~,\\
&\re\Bigl( e^{-i\ta}\det\Bigl(\bfI_{n+1} + i\Bigl[\frac{\partial^2 u}{\partial{z_j}\partial{\bar{z}_k}}\Bigr]_{1\leq j, k\leq n+1} \Bigr)\Bigr) > 0 ~.
\end{align}
Recall that the sum of the arctangent of the eigenvalues of $\bigl[\frac{\partial^2 u}{\partial{z_j}\partial{\bar{z}_k}}\bigr]_{1\leq j, k\leq n+1}$ is called the phase of $u$, and belongs to $(-\frac{n+1}{2}\pi,\frac{n+1}{2}\pi)$.  If $u$ satisfies \eqref{LYZ}, its phase is a constant, and is equal to $\ta$ modulo $\pi\BZ$.

Our ansatz assumes the potential function $u$ is of the form:
\begin{align}\label{ansatz_u}   
    u(z_1,\ldots,z_n,z_{n+1}) &= \sum_{j=1}^n 2p_j(s)(x_j)^2 + 4r(s)
\end{align}
where $p_j(s)$ and $r(s)$ are real-valued functions in $s = \re z_{n+1}$, and $x_j=\re z_j, j=1,\ldots, n$.

We compute
\begin{align*}
    \frac{\pl^2 u}{\pl z_j\pl\bar{z}_k} &= p_j(s) \dt_{jk}  \quad\text{for } j, k=1,\ldots, n ~, \\
    \frac{\pl u}{\pl z_j\pl\bar{z}_{n+1}} &= p'_j(s) x_j  \quad\text{for } j=1,\ldots,n ~, \\
    \frac{\pl^2 u}{\pl z_{n+1}\pl\bar{z}_{n+1}} &= \sum_{j=1}^n \oh p''_j(s)(x_j)^2 + r''(s) ~.
\end{align*}
It follows that the coefficient matrix of $\pl\bpl u$ is
\begin{align} \label{D2u}
     & \begin{bmatrix}
    p_1 & 0 & \cdots & 0 & x_1 p'_1 \\  0 & p_2 & \cdots & 0 & x_2 p'_2 \\  \vdots & \vdots & \ddots & \vdots & \vdots \\
    0 & 0 & \cdots & p_{n} & x_n p'_{n} \\  x_1 p'_1 & x_2 p'_2 & \cdots & x_n p'_{n} & \left( \sum_{j=1}^n \oh(x_j)^2 p''_j(s) + r''(s) \right)
    \end{bmatrix} ~.
\end{align}
With Lemma \ref{lem_det} in the appendix, we compute
\begin{equation}
    \begin{split} \label{I+iD2u}
     &\det\Bigl(\bfI_{n+1} + i\Bigl[\frac{\partial^2 u}{\partial{z_j}\partial{\bar{z}_k}}\Bigr]_{1\leq j, k\leq n+1} \Bigr)\\
     &= (1+ip_1)\cdots(1+ip_n)\biggl(1 + i\biggl(\sum_{j=1}^n \oh(x_j)^2 p''_j(s) + r''(s)\biggr)\biggr) \\
    &\quad + \sum_{j=1}^n(x_j)^2(p_j'(s))^2(1+ip_1)\cdots\widehat{(1+ip_j)}\cdots(1+ip_n) \\
    &= \FF\cdot \biggl(1 + i\biggl(\sum_{j=1}^n \oh(x_j)^2 p''_j(s) + r''(s)\biggr)\biggr) - i \sum_{j=1}^n(x_j)^2(p_j'(s))^2 \frac{\pl\FF}{\pl p_j} ~
 \end{split}
\end{equation}
where \begin{equation}\label{F} \FF = (1+ip_1)\cdots(1+ip_n) ~. \end{equation}

Denoting \begin{equation}\label{F_theta} F_\ta = \re(e^{-i\ta} \FF),\end{equation} we obtain:

\begin{defn} \label{theta_ODE} For any $\theta\in (-\pi, \pi]$,  $p_1(s), \cdots, p_n(s)$ and $r(s)$ are said to satisfy the $\theta$-angle ODE system on the interval $I\subset \BR$ if 
    \begin{align}
    F_\ta(p_1,\ldots,p_n)\,\frac{p''_j}{2} &= \frac{\pl F_\ta}{\pl p_j}\,(p'_j)^2 \quad\text{ for } j\in\{1,\ldots,n\} \quad\text{and}  \label{ODE_p} \\
    F_\ta(p_1,\ldots,p_n)\,r'' &= -F_{\ta+\frac{\pi}{2}}(p_1,\ldots,p_n) ~,  \label{ODE_r}
\end{align}
for $F_\ta = \re(e^{-i\ta} (1+ip_1)\cdots(1+ip_n))$  and any $s\in I$ . As we will see in Lemma \ref{lem_Ft}, $\ta$ and $\ta+\pi$ indeed correspond to equivalent ODE systems.
\end{defn}

\begin{prop} \label{prop_basic}
Suppose $p_1(s), \cdots, p_n(s)$ and $r(s)$ satisfy the ODE system \eqref{ODE_p} \eqref{ODE_r} with $p_j'(s)\not=0$ and $F_\ta(p_1,\ldots, p_n)>0$. The function $u$ formed by \eqref{ansatz_u} satisfies the dHYM/LYZ equation \eqref{LYZ} on the domain $X$.
\end{prop}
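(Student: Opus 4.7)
The strategy is to substitute the ansatz \eqref{ansatz_u} into the LYZ equation via the explicit determinant formula \eqref{I+iD2u} and match coefficients of distinct monomials in $x_1,\dots,x_n$.  The organizing identity is $e^{-i\ta}\FF = F_\ta + iF_{\ta+\pi/2}$, where I set $F_{\ta+\pi/2} := \im(e^{-i\ta}\FF)$; both $F_\ta$ and $F_{\ta+\pi/2}$ are real-valued functions of $p_1,\dots,p_n$ only.

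I would first verify the vanishing of the imaginary part.  Multiplying \eqref{I+iD2u} by $e^{-i\ta}$ and taking imaginary parts produces a quadratic polynomial in $x_1,\dots,x_n$ (regarded as independent real variables, with the $p_j, p_j', p_j'', r''$ treated as functions of $s$).  Differentiating $e^{-i\ta}\FF = F_\ta + iF_{\ta+\pi/2}$ in $p_j$ gives $\im\bigl(ie^{-i\ta}\tfrac{\pl\FF}{\pl p_j}\bigr) = \tfrac{\pl F_\ta}{\pl p_j}$, and from this the constant term of the polynomial simplifies to $r''F_\ta + F_{\ta+\pi/2}$, while the coefficient of $(x_j)^2$ simplifies to $\tfrac{1}{2}p_j''F_\ta - (p_j')^2\tfrac{\pl F_\ta}{\pl p_j}$.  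The ODEs \eqref{ODE_r} and \eqref{ODE_p} are precisely the vanishing of these two quantities, so the first equation of \eqref{LYZ} holds identically in $(x_1,\dots,x_n)$.

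For the strict positivity of the real part, I would run the parallel computation.  Taking the real part of $e^{-i\ta}\det(\bfI_{n+1}+i[\pl\bpl u])$ gives constant term $F_\ta - r''F_{\ta+\pi/2}$, which after eliminating $r''$ via \eqref{ODE_r} becomes
\[
F_\ta + \frac{F_{\ta+\pi/2}^2}{F_\ta} \;=\; \frac{|\FF|^2}{F_\ta} \;=\; \frac{\prod_k(1+p_k^2)}{F_\ta}.
\]
For the coefficient of $(x_j)^2$, I would derive the partial derivatives
\[
\frac{\pl F_\ta}{\pl p_j} = \frac{p_jF_\ta - F_{\ta+\pi/2}}{1+p_j^2}, \qquad \frac{\pl F_{\ta+\pi/2}}{\pl p_j} = \frac{F_\ta + p_jF_{\ta+\pi/2}}{1+p_j^2}
\]
from $\tfrac{\pl\FF}{\pl p_j} = i\FF/(1+ip_j)$, eliminate $p_j''$ using \eqref{ODE_p}, and apply the resulting identity $F_\ta\tfrac{\pl F_{\ta+\pi/2}}{\pl p_j} - F_{\ta+\pi/2}\tfrac{\pl F_\ta}{\pl p_j} = |\FF|^2/(1+p_j^2)$; the coefficient then collapses to $(p_j')^2\prod_{k\neq j}(1+p_k^2)/F_\ta$.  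Hence
\[
\re\bigl(e^{-i\ta}\det(\bfI_{n+1}+i[\pl\bpl u])\bigr) = \frac{1}{F_\ta}\Bigl(\prod_k(1+p_k^2) + \sum_{j=1}^n (x_j)^2\,(p_j')^2\prod_{k\neq j}(1+p_k^2)\Bigr),
\]
which is strictly positive under the hypothesis $F_\ta>0$.  The proof is essentially a direct substitution; the only step that requires a little care is the algebraic reduction that consolidates linear combinations of $F_\ta$, $F_{\ta+\pi/2}$, and their $p_j$-derivatives back into products of the factors $(1+p_k^2)$, and the consistent bookkeeping of real versus imaginary parts throughout.
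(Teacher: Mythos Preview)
Your proof is correct and follows exactly the approach implicit in the paper: the preceding computations \eqref{D2u}--\eqref{I+iD2u} reduce the LYZ equation to a quadratic polynomial in $x_1,\ldots,x_n$, and Definition~\ref{theta_ODE} is precisely the vanishing of its coefficients, so the paper regards the proposition as immediate and gives no separate proof. Your explicit verification of the strict positivity of the real part---which the paper does not carry out---is a nice addition and explains why the hypothesis $F_\ta>0$ is included.
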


It follows that
\begin{align}\label{ODE_p2}
   (\frac{1}{p_j'}) ' &= - 2\pl_{p_j} (\log F_\ta) 
\end{align} and
\begin{align*}
    r'' &= - \frac{F_{\ta + \frac{\pi}{2}}}{F_\ta} ~.
\end{align*}

\begin{lem} \label{lem_Ft}
    For any $\ta\in\BR$, the polynomial $F_{\ta}$ defined by \eqref{F_theta} has the following properties.
    \begin{enumerate}
        \item $F_{\ta} = -F_{\ta+\pi}$.
        \item $(F_\ta)^2 + (F_{\ta+\frac{\pi}{2}})^2 = \prod_{j=1}^n(1+p_j^{\,2})$.
        \item For any $j\in\{1,\ldots,n\}$,
        \begin{align*}
            p_j\,F_\ta - (1+p_j^{\,2})\,\frac{\pl F_\ta}{\pl p_j} &= F_{\ta+\frac{\pi}{2}} ~.
        \end{align*}
    \end{enumerate}
\end{lem}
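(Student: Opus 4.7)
The three identities are elementary once we package $F_\theta$ together with its $\pi/2$-shift into a single complex quantity. The key observation is that
\begin{align*}
F_\theta + i F_{\theta+\pi/2} = e^{-i\theta}\FF,
\end{align*}
since $F_{\theta+\pi/2} = \re(e^{-i(\theta+\pi/2)}\FF) = \re(-i e^{-i\theta}\FF) = \im(e^{-i\theta}\FF)$. From this vantage point each of (1)--(3) reduces to a short manipulation of $e^{-i\theta}\FF$, followed by taking the real part.

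First I would dispose of (1), which is immediate: $e^{-i(\theta+\pi)} = -e^{-i\theta}$, and taking real parts gives $F_{\theta+\pi} = -F_\theta$. Next, for (2), the identification above yields $(F_\theta)^2 + (F_{\theta+\pi/2})^2 = |e^{-i\theta}\FF|^2 = |\FF|^2$, and since $|1+ip_j|^2 = 1+p_j^2$, the product $|\FF|^2$ equals $\prod_{j=1}^n(1+p_j^2)$, as desired.

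The substantive computation is (3), which I would carry out at the level of the complex polynomial $\FF$ before descending to $F_\theta$. Writing $\FF = (1+ip_j)\,G_j$ with $G_j = \prod_{k\neq j}(1+ip_k)$, one has $\partial\FF/\partial p_j = i G_j$, hence
\begin{align*}
p_j\,\FF - (1+p_j^{\,2})\,\frac{\pl\FF}{\pl p_j} = \bigl(p_j(1+ip_j) - i(1+p_j^{\,2})\bigr)G_j = (p_j - i)\,G_j = -i\,(1+ip_j)\,G_j = -i\,\FF.
\end{align*}
Multiplying by $e^{-i\theta}$ and taking real parts converts the left-hand side to $p_j F_\theta - (1+p_j^{\,2})\,\pl_{p_j}F_\theta$ and the right-hand side to $\re(-i e^{-i\theta}\FF) = \im(e^{-i\theta}\FF) = F_{\theta+\pi/2}$, which is the claimed identity.

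None of the three steps is a genuine obstacle; the only small care point is the sign in the identification $F_{\theta+\pi/2} = \im(e^{-i\theta}\FF)$, which is what makes (2) and (3) work out cleanly. Once that is fixed, the proof is just bookkeeping with the factorization $\FF = \prod(1+ip_j)$ and the elementary identity $p_j(1+ip_j) - i(1+p_j^{\,2}) = -i(1+ip_j)$.
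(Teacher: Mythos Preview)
Your proof is correct and follows essentially the same approach as the paper: items (i) and (ii) are dispatched as immediate consequences of the definition (the paper says this in one sentence, while you make the identification $F_\ta + iF_{\ta+\pi/2} = e^{-i\ta}\FF$ explicit), and for (iii) both you and the paper compute $p_j\FF - (1+p_j^{\,2})\pl_{p_j}\FF = -i\FF$ at the level of the complex polynomial and then take real parts. The only cosmetic difference is that the paper factors $\pl_{p_j}\FF = \frac{i}{1+ip_j}\FF$ and uses $(1+p_j^{\,2}) = (1+ip_j)(1-ip_j)$, whereas you pull out $G_j = \prod_{k\neq j}(1+ip_k)$; these are the same computation.
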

\begin{proof}
Properties (i) and (ii) follow directly from the definition.  We compute
\begin{align*}
    p_j \FF - (1+p_j^{\,2})\frac{\pl \FF}{\pl p_j} &= \left(p_j - i(1-i p_j)\right) \FF = -i\FF ~,
\end{align*}
and property (iii) follows.
\end{proof}

\subsection{First integrals of the ODE system}

In this section, we show that the ODE system \eqref{ODE_p} admits $n$ first integrals and the system is integrable. 

Suppose that $p'_j$ and $F_\ta(p_1,\ldots,p_n)$ are nonzero.  By \eqref{ODE_p},
\begin{align*}
    \frac{\dd}{\dd s} \biggl(\frac{\prod_{j=1}^n p'_j}{(F_\ta)^2}\biggr) &= \frac{\prod_{k=1}^n p'_k}{(F_\ta)^2} \sum_{j=1}^n \biggl( \frac{p''_j}{p'_j} F_\ta - 2\frac{\pl F_\ta}{\pl p_j}p'_j \biggr) = 0 ~.
\end{align*}
Thus, there exists a constant $\kp\neq 0$ such that
\begin{align}
    \frac{\prod_{j=1}^n p'_j}{(F_\ta)^2} &= {\kp} ~.  \label{conserve0}
\end{align}

For $j\in\{1,\ldots,n\}$, let
\begin{align}
    \xi_j &= \frac{1+p_j^{\,2}}{p'_j} ~.  \label{xi_def}
\end{align}
According to \eqref{ODE_p} and Lemma \ref{lem_Ft} (iii),
\begin{align}
    \xi'_j &= 2p_j - (1+p_j^{\,2})\frac{p''_j}{(p'_j)^2}  \notag \\
    &= \frac{2}{F_\ta} \biggl( p_j\,F_\ta - (1+p_j^{\,2})\frac{\pl F_\ta}{\pl p_j} \biggr) = \frac{2F_{\ta + \frac{\pi}{2}}}{F_{\ta}}  \label{xi_der}
\end{align}
for any $j\in\{1,\ldots,n\}$.  Hence, there exist constants $\kp_2,\ldots,\kp_n$ such that
\begin{align}
    \xi_1 - \xi_j = \kp_j  \label{xi_rel}
\end{align}
for $j\in\{2,\ldots,n\}$.  It is convenient to set $\kp_1$ to be $0$.

By \eqref{xi_der}, Lemma \ref{lem_Ft} (ii), \eqref{xi_def} and \eqref{conserve0},
\begin{align}
    \frac{(\xi'_j)^2}{4} &= \biggl(\frac{F_{\ta+\frac{\pi}{2}}}{F_\ta}\biggr)^2 = \frac{\prod_{k=1}^n(1+p_k^{\,2})}{(F_\ta)^2} - 1 = \frac{\prod_{k=1}^n(p'_i\,\xi_k)}{(F_\ta)^2} - 1 \notag \\
    &= \kp\prod_{k=1}^n\xi_k - 1  \label{xi_eqn1}
\end{align}
for $j\in\{1,\ldots,n\}$.  Consider $j=1$, and apply \eqref{xi_rel}; one finds that
\begin{align} \label{xi_eqn2}
    \xi'_1 &= \pm2\sqrt{\kp\prod_{j=1}^n(\xi_1-\kp_j) - 1} ~.
\end{align}

\subsection{Limits of $s$ when $n\geq 3$}

It follows from the definition \eqref{xi_def} of $\xi_j$ and $p_k'\neq 0$ that $\xi_j$ does not change sign.
We would like to argue that $p_k(s)$ cannot be defined for all $s\in\BR$ when $n\geq 3$.

At first, suppose that $\xi_i\geq 0$ and is bounded from above, $\xi_i \leq C$ for some $C > 0$.  By \eqref{xi_def}, $\arctan p_k = \int\frac{1}{\xi_k}\dd s$, and the lower bound of $\xi_k$ implies that $p_k$ must blow up for finite $s$.

On the other hand, suppose that $\xi_1$ is unbounded from above.  It follows from \eqref{xi_eqn2} that
\begin{align*}
    \int \dd s &= \frac{\pm1}{2} \int \frac{\dd \xi_1}{\sqrt{\kp\prod_{j=1}^n(\xi_1-\kp_j) - 1}} ~.
\end{align*}
If one considers the improper integral of the right hand side (to $\xi_1 = \infty$), it diverges only when $n = 2$.  Therefore, $p_k(s)$ cannot be defined for all $s\in\BR$ if $n\geq 3$.

\subsection{The isotropic Case}

In this subsection, we consider the \emph{isotropic} case of Proposition \ref{prop_basic}.  That is, $p_1 = \cdots = p_n$.  Abbreviate them as $p$, and let $\vph = \arctan p$.  Equations \eqref{ODE_p} and \eqref{ODE_r} read as follows.
\begin{align}
    (1+p^2)^\oh\,\re(e^{-i\ta}e^{in\vph})\,\frac{p''}{2} &= -\im(e^{-i\ta}e^{i(n-1)\vph})\,(p')^2 ~, \label{iso_p} \\
    \re(e^{-i\ta}e^{in\vph})\,r'' &= -\im(e^{-i\ta}e^{in\vph}) ~, \label{iso_r}
\end{align}
and we assume that $p' \neq 0$ and $\re(e^{-i\ta}e^{in\vph}) \neq 0$.  Note that \eqref{iso_r} implies that
\begin{align} \label{angle_r2}
    r'' = \tan(\ta - n\vph)  \quad\Leftrightarrow\quad  \arctan r'' = \ta - n\vph + k\pi
\end{align}
where $k$ is the unique integer such that $|\ta - n\vph + k\pi| < \frac{\pi}{2}$.

For a solution to \eqref{LYZ}, its phase be evaluated at $x_1 = \cdots = x_n = 0$.  By \eqref{angle_r2}, the phase is
\begin{align*}
    n \arctan p + \arctan r'' &= n\vph + \ta - n\vph + k\pi = \ta + k\pi ~.
\end{align*}

The above discussion gives the following:
\begin{align*}
    (p')^n &= \kp\bigl(\re(e^{-i\ta}(1+ip)^n)\bigr)^2 ~,
\end{align*}
or equivalently, in terms of $\vph$,
\begin{align*}
    (\vph')^n &= \kp \bigl( \cos(n\vph - \ta) \bigr)^2 ~.
\end{align*}
One infers that
\begin{align}
    (n\vph - \ta)' &= \kp' \bigl( \cos(n\vph - \ta) \bigr)^{\frac{2}{n}}
\end{align}
for some constant $\kp'$.
By analyzing the linearization at where $n\vph - \ta - \frac{\pi}{2} \in \BZ \pi$, it is not hard to find that for $n\geq 3$, $n\vph - \ta$ cannot be defined for all $s$.

\begin{prop}
    When $n\geq 3$ and $p_1(s) = \cdots = p_n(s)$, there is no non-constant entire solution to \eqref{ODE_p} and \eqref{ODE_r}.
\end{prop}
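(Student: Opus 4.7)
The plan is to analyze the autonomous first-order equation
\begin{align*}
    \psi' = \kp'\,(\cos\psi)^{2/n}, \qquad \psi := n\vph - \ta,
\end{align*}
derived in the excerpt on any interval where $\vph' \neq 0$, in combination with the constraint $r'' = \tan\psi$ from \eqref{angle_r2}. The key point is that entirety of $r$ forces $\cos\psi$ to be nowhere zero, while the ODE drives $\psi$ toward a zero of $\cos\psi$ in finite parameter time precisely when $n \geq 3$.

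First I dispose of trivialities. If $\kp' = 0$, then $\vph$ is constant, hence so is $p$, and this is the excluded case. So assume $\kp' \neq 0$. Since $r \in C^2(\BR)$, equation \eqref{iso_r} requires $\cos\psi(s) \neq 0$ for every $s \in \BR$; by continuity $\psi$ is then trapped in a single open interval $(k\pi - \tfrac{\pi}{2},\, k\pi + \tfrac{\pi}{2})$ for some fixed $k \in \BZ$, and on that interval the ODE forces $\psi' \neq 0$, so $\psi$ is strictly monotone. After a reflection $s \mapsto -s$ if needed, I may assume $\psi$ is strictly increasing, so $\psi \to k\pi + \tfrac{\pi}{2}$ as $s \to s^\ast := \sup(\text{domain})$.

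The core of the proof is then a single integral estimate. Separating variables,
\begin{align*}
    s^\ast - s_0 = \int_{\psi(s_0)}^{k\pi + \pi/2} \frac{d\psi}{\kp'\,(\cos\psi)^{2/n}},
\end{align*}
and since $|\cos\psi| \asymp |\psi - (k\pi + \tfrac{\pi}{2})|$ near the endpoint, the integrand behaves like $|\psi - (k\pi + \tfrac{\pi}{2})|^{-2/n}$. For $n \geq 3$ the exponent $2/n < 1$, the improper integral converges, so $s^\ast < \infty$: the variable $\psi$ reaches the singular value $k\pi + \tfrac{\pi}{2}$ at a finite parameter value, contradicting $\cos\psi \neq 0$ on $\BR$. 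This is the substantive content; it is also what the excerpt means by ``analyzing the linearization'' at $\psi = \tfrac{\pi}{2}$.

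The only minor obstacle I anticipate is interpreting the $n$-th root of $\kp\cos^2\psi$ consistently across both parities of $n$. This is handled by first extracting the modulus $|\psi'| = n|\kp|^{1/n}|\cos\psi|^{2/n}$ directly from the conserved identity $(\psi')^n = n^n\kp\cos^2\psi$, since only $|\psi'|$ enters the travel-time bound; the sign of $\psi'$ is determined by a choice of orientation on each monotone interval, which is exactly what Step~1 provides.
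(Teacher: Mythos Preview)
Your proposal is correct and follows essentially the same approach as the paper: reduce to the autonomous equation $(n\vph-\ta)'=\kp'(\cos(n\vph-\ta))^{2/n}$ and argue finite travel time to a zero of $\cos\psi$ when $n\geq 3$. Your integral estimate is exactly what the paper's one-line remark about ``analyzing the linearization'' amounts to, and your explicit use of \eqref{iso_r} to force $\cos\psi\neq 0$ makes the contradiction cleaner than the paper's somewhat informal ``$n\vph-\ta$ cannot be defined for all $s$.''
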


\section{Entire solutions of dHYM/LYZ} \label{sec_entire}

\subsection{On $\mathbb{C}^3$}

When $n=2$, i.e., on $\BC^3$, \eqref{xi_eqn2} can be solved explicitly and we obtain explicit solutions to the dHYM/LYZ equation.  In particular, when the constant $\kp$ \eqref{conserve0} is positive, the solution is defined on the whole space.

\subsubsection{When $\kp$ is positive} \label{sec_entn2}

When $\kp > 0$, the polynomial $\kp\,\xi_1^2 - \kp\,\kp_2\,\xi_1 - 1$ must have one positive root and one negative root.  Denote its roots by $\af^2$ and $-\bt^2$ for $\af,\bt > 0$.  It follows that $\kp = (\af \bt)^{-2}$, $\kp_2 = \af^2 - \bt^2$, and \eqref{xi_eqn2} becomes
\begin{align*}
    \pm1 & = \frac{\af\bt}{2} \frac{\xi_1'}{\sqrt{(\xi_1 - \af^2)(\xi_1 + \bt^2)}} ~.
\end{align*}
We now assume that $\xi_1 > \af^2$, and the case where $\xi_1 < - \bt^2$ is similar.
By integrating both sides and translating $s$,
\begin{align*}
    \tanh(\frac{s}{\af\bt}) = \frac{\sqrt{\xi_1 - \af^2}}{\sqrt{\xi_1 + \bt^2}} 
    \qquad\Rightarrow\quad \xi_1 = \af^2\cosh^2(\frac{s}{\af\bt}) + \bt^2\sinh^2(\frac{s}{\af\bt}) ~.
\end{align*}
Together with \eqref{xi_rel},
\begin{align*}
    \xi_2 &= \bt^2\cosh^2(\frac{s}{\af\bt}) + \af^2\sinh^2(\frac{s}{\af\bt}) ~.
\end{align*}

With $\xi_1$, $p_1$ can be found by \eqref{xi_def}:
\begin{align}
    (\arctan p_1)' &= \frac{1}{\xi_1} = \frac{1}{\af^2\cosh^2(\frac{s}{\af\bt}) + \bt^2\sinh^2(\frac{s}{\af\bt})}  \notag \\
    \Rightarrow\quad \arctan p_1 &= \arctan\left(\frac{\bt}{\af}\tanh(\frac{s}{\af\bt})\right) + \psi_1  \notag \\
    \Rightarrow\quad p_1 &= \frac{\af\sin\psi_1\cosh(\frac{s}{\af\bt}) + \bt\cos\psi_1\sinh(\frac{s}{\af\bt})}{\af\cos\psi_1\cosh(\frac{s}{\af\bt}) - \bt\sin\psi_1\sinh(\frac{s}{\af\bt})} ~,  \label{n2p1}
\end{align}
for some $\psi_1\in\BR$.  Similarly,
\begin{align}
    p_2 &= \frac{\bt\sin\psi_2\cosh(\frac{s}{\af\bt}) + \af\cos\psi_2\sinh(\frac{s}{\af\bt})}{\bt\cos\psi_2\cosh(\frac{s}{\af\bt}) - \af\sin\psi_2\sinh(\frac{s}{\af\bt})}  \label{n2p2}
\end{align}
for some $\psi_2\in\BR$.

Using \eqref{conserve0} and Proposition \ref{prop_basic}, one finds that
\begin{align} \label{Ftan2}
    F_{\ta} &= \frac{\af\bt}{({\af\cos\psi_1\cosh(\frac{s}{\af\bt}) - \bt\sin\psi_1\sinh(\frac{s}{\af\bt})}) ({\bt\cos\psi_2\cosh(\frac{s}{\af\bt}) - \af\sin\psi_2\sinh(\frac{s}{\af\bt})})} ~.
\end{align}
We compute
\begin{align}
    \FF &= (1 + ip_1)(1 + ip_2) \notag \\
    &= \frac{F_{\ta}}{\af\bt} e^{i(\psi_1 + \psi_2)} \left( \af\bt + \frac{i}{2}(\af^2+\bt^2)\sinh(\frac{2s}{\af\bt}) \right) ~. \label{FF2}
\end{align}
It follows that
\begin{align}
    e^{i\ta} &= e^{i(\psi_1 + \psi_2)} ~, \\
    F_{\ta+\frac{\pi}{2}} &= \frac{\af^2 + \bt^2}{2\af\bt} \sinh(\frac{2s}{\af\bt})\,F_\ta ~. \notag
\end{align}
By \eqref{ODE_r},
\begin{align}
    r'' = - \frac{\af^2 + \bt^2}{2\af\bt} \sinh(\frac{2s}{\af\bt}) \quad\Rightarrow\quad
    r &= - \frac{\af\bt(\af^2 + \bt^2)}{8} \sinh(\frac{2s}{\af\bt}) ~,  \label{n2r}
\end{align}
up to adding an affine function in $s$.  With the explicit formulae \eqref{n2p1}, \eqref{n2p2} and \eqref{n2r}, the phase of \eqref{D2u} is a constant, and is equal to
\begin{align}
    \psi_1 + \psi_2
\end{align}

In order for these expressions to be defined for all $s$, the denominators have to be nonzero for all $s$.  It means that
\begin{align*}
    \frac{\af}{\bt} \geq \tan\psi_1\tanh(\frac{s}{\af\bt})  \quad\text{and}\quad
    \frac{\bt}{\af} \geq \tan\psi_2\tanh(\frac{s}{\af\bt})
\end{align*}
for all $s$, and thus
\begin{align}
    \frac{\af}{\bt} \geq |\tan\psi_1|\quad\text{and}\quad
    \frac{\bt}{\af} \geq |\tan\psi_2|  \label{n2angle1}
\end{align}
It follows from $\arctan \frac{\af}{\bt} + \arctan \frac{\af}{\bt} = \frac{\pi}{2}$ that the phase, $\psi_1 + \psi_2$, belongs to $[-\frac{\pi}{2},\frac{\pi}{2}]$.  We summarize the discussion in the following proposition.

\begin{prop} \label{prop_entire3}
    For any positive $\af,\bt$, and $\psi_1,\psi_2\in[-\frac{\pi}{2},\frac{\pi}{2}]$ satisfying \eqref{n2angle1}, the potential function $2p_1(s)(x_1)^2 + 2p_2(s)(x_2)^2 + 4r(s)$ is an entire solution to the LYZ equation on $\BC^3$, with phase $\psi_1+\psi_2$.  Here, $s = \re z_3$, and the functions are given by \eqref{n2p1}, \eqref{n2p2} and \eqref{n2r}.  In particular, the LYZ equation with non-supercritical phase on $\BC^3$ admits entire solutions.
\end{prop}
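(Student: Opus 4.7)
The derivation preceding the statement already reverse-engineers the explicit formulas \eqref{n2p1}--\eqref{n2r} from the first-integral ODE \eqref{xi_eqn2} with $\kp > 0$. The plan thus reduces to verifying that the closed-form expressions satisfy all the hypotheses of Proposition~\ref{prop_basic} on all of $\BR$, and then identifying the phase and the admissible range.

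First I would check that the formulas \eqref{n2p1}, \eqref{n2p2} satisfy $(1+p_j^{\,2})/p_j' = \xi_j$ for the explicit $\xi_j$ obtained above, which by \eqref{xi_der} combined with Lemma~\ref{lem_Ft}(iii) is equivalent to \eqref{ODE_p}. I would then multiply out $(1+ip_1)(1+ip_2)$ from the explicit formulas to confirm the factorization
\[
\FF \;=\; \frac{F_\ta}{\af\bt}\,e^{i(\psi_1+\psi_2)}\Bigl(\af\bt + \tfrac{i}{2}(\af^2+\bt^2)\sinh\bigl(\tfrac{2s}{\af\bt}\bigr)\Bigr),
\]
which simultaneously identifies $e^{i\ta} = e^{i(\psi_1+\psi_2)}$ (hence the phase) and, via the ratio $F_{\ta+\pi/2}/F_\ta$, produces the $r$-equation \eqref{ODE_r}, giving \eqref{n2r} after integrating twice.

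Second, I would check smoothness on all of $\BR$. The only possible singularities lie at zeros of the denominators $D_1(s) = \af\cos\psi_1\cosh(\tfrac{s}{\af\bt}) - \bt\sin\psi_1\sinh(\tfrac{s}{\af\bt})$ and its counterpart $D_2(s)$. Writing $D_1 = Ae^{s/(\af\bt)} + Be^{-s/(\af\bt)}$, a real zero exists exactly when $AB < 0$, i.e.\ when $|\bt\sin\psi_1| > |\af\cos\psi_1|$; so $D_1$ is everywhere nonvanishing iff $\af/\bt \geq |\tan\psi_1|$, which is precisely \eqref{n2angle1}. Since $D_1(0) = \af\cos\psi_1 > 0$ (and likewise $D_2(0) > 0$), continuity forces $D_1, D_2 > 0$ throughout, so $F_\ta = \af\bt/(D_1 D_2) > 0$; the identity $(\arctan p_j)' = 1/\xi_j \neq 0$ gives $p_j' \neq 0$. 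Proposition~\ref{prop_basic} then produces an entire LYZ solution on $\BC^3$ of phase $\psi_1+\psi_2$.

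For the ``in particular'' clause, the identity $\arctan(\af/\bt) + \arctan(\bt/\af) = \pi/2$ together with \eqref{n2angle1} forces $|\psi_1| + |\psi_2| \leq \pi/2$; conversely any $\Ta \in [-\pi/2, \pi/2]$ is realized by $\af = \bt = 1$ and $\psi_1 = \psi_2 = \Ta/2$. The step requiring genuine attention is the sharp dichotomy characterizing when $D_1, D_2$ vanish on $\BR$, which pins down \eqref{n2angle1} as exactly the globality condition; everything else follows by direct computation from the reverse-engineered setup and Proposition~\ref{prop_basic}.
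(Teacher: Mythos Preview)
Your proposal is correct and follows essentially the same route as the paper: the proposition is stated as a summary of the derivation in \S\ref{sec_entn2}, which reverse-engineers \eqref{n2p1}--\eqref{n2r} from \eqref{xi_eqn2}, identifies the phase via the factorization of $\FF$, and reads off \eqref{n2angle1} as the nonvanishing condition on the denominators. You add two details the paper leaves implicit---the explicit check that $F_\ta = \af\bt/(D_1D_2) > 0$ so that Proposition~\ref{prop_basic} applies, and the observation that $\af=\bt=1$, $\psi_1=\psi_2=\Ta/2$ realizes every $\Ta\in[-\tfrac{\pi}{2},\tfrac{\pi}{2}]$---but otherwise the argument is the same.
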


\subsubsection{When $\kp$ is negative}

Suppose that $\kp < 0$.  From \eqref{xi_eqn1}, the polynomial $\kp\, \xi_1^2 - \kp\,\kp_2\,\xi_1 - 1$ must be positive somewhere.  Therefore, the quadratic polynomial admits two real roots of the same sign.  Assume that the roots are $\af^2 > \bt^2 > 0$, with $\af,\bt>0$.  We leave it for the readers to verify that the case of negative roots corresponds to switching the roles of $\xi_1$ and $\xi_2$ in the following discussion.

In this case, $\kp = -(\af\bt)^{-2}$ and $\kp_2 = \af^2 + \bt^2$, and \eqref{xi_eqn2} becomes
\begin{align*}
    \pm1 &= \frac{\af\bt}{2} \frac{\xi_1'}{\sqrt{(\af^2 - \xi_1) (\xi_1 - \bt^2)}} ~.
\end{align*}
With a similar computation,
\begin{align*}
    \xi_1 = \af^2\cos^2(\frac{s}{\af\bt}) + \bt^2\sin^2(\frac{s}{\af\bt})  \quad\text{and}\quad
    \xi_2 = - \bt^2\cos^2(\frac{s}{\af\bt}) - \af^2\sin^2(\frac{s}{\af\bt}) ~.
\end{align*}
By integration their reciprocals,
\begin{align*}
    p_1 &= \frac{\af\sin\psi_1\cos(\frac{s}{\af\bt}) + \bt\cos\psi_1\sin(\frac{s}{\af\bt})}{\af\cos\psi_1\cos(\frac{s}{\af\bt}) - \bt\sin\psi_1\sin(\frac{s}{\af\bt})} ~, \\
    p_2 &= \frac{\bt\sin\psi_2\cos(\frac{s}{\af\bt}) - \af\cos\psi_2\sin(\frac{s}{\af\bt})}{\bt\cos\psi_2\cos(\frac{s}{\af\bt}) + \af\sin\psi_2\sin(\frac{s}{\af\bt})}
\end{align*}
for some $\psi_1,\psi_2\in\BR$.  The denominators of $p_1$ and $p_2$ cannot be nonzero for all $s$, and the solution cannot be extended to an entire solution in this case.

\subsection{On $\BC^{n+1}$ with ${n+1\geq 4}$}

In \cite{Li21}*{Theorem 2}, Li constructed non-quadratic solutions to the special Lagrangian equation \ref{SPL} with $\ta = 0$ by using the ansatz $f = \oh p(x_3) x_1^2 + q(x_3) x_2 + r(x_3)$.

It suggests that we may obtain entire solutions in higher dimensions by modifying the solutions given by Proposition \ref{prop_entire3} based this type of ansatz.  Specifically, when $n\geq 3$, consider
\begin{align} \label{ansatz_2n}
    \td{u}(z_1,z_2,z_3,\ldots,z_n,z_{n+1}) &= 2p_1(s) (x_1)^2 + 2p_2(s) (x_2)^2 + 4\sum_{j=3}^nq_j(s) x_j + 4r(s)
\end{align}
where $s = \re z_{n+1}$ and $x_j = \re z_j$ for $j=1,\ldots,n$.  After a direct computation, the coefficient matrix of $\pl\bpl\td{u}$ is
\begin{align} \label{D2tdu}
     & \begin{bmatrix}
    p_1 & 0 & 0 & \cdots & 0 & x_1 p'_1 \\  0 & p_2 & 0 & \cdots & 0 & x_2 p'_2 \\
    0 & 0 & 0 & \cdots & 0 & q_3' \\ \vdots & \vdots & \vdots & \ddots & \vdots & \vdots \\
    0 & 0 & 0 & \cdots & 0 & q_n' \\  x_1 p'_1 & x_2 p'_2 & q_3' & \cdots & q'_{n} & \frac{1}{4}\frac{\dd^2}{\dd s^2}\td{u}
    \end{bmatrix} ~.
\end{align}
It follows that
\begin{align*}
     &\quad \det\biggl(\bfI_{n+1} + i\biggl[\frac{\partial^2 u}{\partial{z_j}\partial{\bar{z}_k}}\biggr]_{1\leq j, k\leq n+1} \biggr)\\
     &= (1+i p_1)(1+i p_2) \biggl[ 1 + (x_1)^2\frac{(p_1')^2}{1+ip_1} + (x_2)^2\frac{(p_2')^2}{1+ip_2} + \sum_{j=3}^n(q_j')^2 \\
     &\qquad\qquad\qquad\qquad + i \Bigl( \oh p''_1(s) (x_1)^2 + \oh p''_2(s) (x_2)^2 + \sum_{j=3}^nq''_j(s) x_j + r''(s) \Bigr) \biggr] ~.
\end{align*}

Let $\FF = (1+ip_1)(1+ip_2)$, and $F_\ta = \re(e^{-i\ta}\FF)$ as before.  The LYZ equation \eqref{LYZ} becomes the following system:
\begin{align}
    F_\ta(p_1,p_2) \frac{p''_j}{2} &= \frac{\pl F_\ta}{\pl p_j} (p'_j)^2 \qquad\qquad\text{for } j=1,2 ~,  \label{ent01} \\
    F_\ta(p_1,p_2)\, q_k'' &= 0 \qquad\qquad\qquad\text{for } k=3,\ldots,n ~,  \label{ent02} \\
    F_\ta(p_1,p_2)\, r'' &= -F_{\ta+\frac{\pi}{2}}(p_1,p_2)\bigl( 1+\sum_{k=3}^n(q'_k)^2 \bigr) ~.  \label{ent03}
\end{align}
The equation \eqref{ent01} is analyzed in Section~\ref{sec_entn2}.  From \eqref{ent02}, one infers that $q_k(s) = \gm_ks + \tau_k$ for some constants $\gm_k,\tau_k$.  Note that $\tau_k$'s do not show up in $\pl\bpl\td{u}$.  By comparing \eqref{ent03} with \eqref{n2r}, it is not hard to find that
\begin{align*}
    r &= - \frac{\af\bt(\af^2 + \bt^2)}{8} (1 + \sum_{k=3}^n(\gm_k)^2) \sinh(\frac{2s}{\af\bt}) ~.
\end{align*}

We summarize the discussion in the following proposition.
\begin{prop} \label{thm_entiren}
    Suppose that $n\geq 3$.  For any $\af,\bt >0$, $\psi_1,\psi_2\in[-\frac{\pi}{2},\frac{\pi}{2}]$ satisfying \eqref{n2angle1} and $\gm_3,\ldots,\gm_n\in\BR$, the function $\td{u}$ defined by
    \begin{align*}
        &2p_1(s)(x_1)^2 + 2p_2(s)(x_2)^2 + 4\sum_{k=3}^n \gm_k\,s\,x_k - 4\frac{\af\bt(\af^2 + \bt^2)}{8} \Bigl(1 + \sum_{k=3}^n(\gm_k)^2\Bigr) \sinh(\frac{2s}{\af\bt})
    \end{align*}
    is an entire solution to the LYZ equation \eqref{LYZ} on $\BC^{n+1}$ with phase $\ta = \psi_1+\psi_2$.  Here, $x_j = \re z_j$ for $j=1,\ldots,n$, $s = \re z_{n+1}$, and \(p_1(s), p_2(s)\) are defined by \eqref{n2p1}, \eqref{n2p2}.

    In other words, the LYZ equation admits non-polynomial entire solutions on $\BC^{n+1}$ with any phase within $[-\frac{\pi}{2},\frac{\pi}{2}]$.
\end{prop}

    More generally, for
    \begin{align} \label{general_potential}
        u = \sum_{j=1}^n (2p_j(s)(x_j)^2 + 4 q_j(s)x_j) + 4r(s) ~,    
    \end{align}
    the dHYM/LYZ equation \eqref{LYZ} becomes
    \begin{align}
        F_\ta(p_1,\ldots,p_n)\,\frac{p''_j}{2} &= \frac{\pl F_\ta}{\pl p_j}\,(p'_j)^2 \qquad\text{for } j = 1,\ldots,n ~, \label{with_linear_01} \\
        F_\ta(p_1,\ldots,p_n)\,\frac{q''_j}{2} &= \frac{\pl F_\ta}{\pl p_j}\,p'_j\,q'_j \qquad\text{for } j = 1,\ldots,n ~, \label{with_linear_02} \\
        F_\ta(p_1,\ldots,p_n)\,r'' &= \sum_{j=1}^n\frac{\pl F_\ta}{\pl p_j}\,(q'_j)^2 - F_{\ta+\frac{\pi}{2}}(p_1,\ldots,p_n) ~. \label{with_linear_03}
    \end{align}
By using Proposition \ref{thm_entiren}, it is not hard to construct entire solutions to the LYZ equation \eqref{LYZ} on $\BC^{n+1}$ of any subcritical phase.

\begin{thm} \label{thm_entiren_sub}
    For any $n\geq 3$, the LYZ equation \eqref{LYZ} on $\BC^{n+1}$ admits non-polynomial entire solutions with any subcritical phase.
\end{thm}

\begin{proof}
Let $\af,\bt >0$, $\psi_1,\psi_2\in[-\frac{\pi}{2},\frac{\pi}{2}]$ satisfying \eqref{n2angle1}, $\psi_3,\ldots,\psi_n\in(-\frac{\pi}{2},\frac{\pi}{2})$ and $\gm_3,\ldots,\gm_n\in\BR$.
Set the coefficient functions and the angle as follows:
\begin{itemize}
    \item Let \(p_1(s)\) and \(p_2(s)\) be defined by \eqref{n2p1} and \eqref{n2p2}, \(q_1(s) = q_2(s) = 0\).
    \item For \(k=3,\ldots,n\), let \(p_k(s) = \tan(\psi_k)\) and \(q_k(s) = \sec(\psi_k)\gm_k s\).
    \item Let \(r(s)\) be
    \begin{align} \label{entire_r}
        - \frac{\af\bt(\af^2 + \bt^2)}{8} (1 + \sum_{k=3}^n(\gm_k)^2) \sinh(\frac{2s}{\af\bt}) + \frac{1}{2}\sum_{k=3}^n\tan(\psi_k)(\gm_k s)^2 ~.
    \end{align}
    \item The angle \(\ta = \sum_{j=1}^n\psi_j\).
\end{itemize}

Denote \((1+ip_1)(1+ip_2)\) by \(\FF_0(p_1,p_2)\).  It follows that
\begin{align*}
    F_\ta(p_1,p_2,\tan(\psi_3),\ldots,\tan(\psi_n) &= \prod_{k=3}^n\sec(\psi_k) \re(e^{-i(\psi_1+\psi_2)} \FF_0(p_1,p_2)) ~.
\end{align*}
Due to Proposition \ref{thm_entiren}, \eqref{with_linear_01} and \eqref{with_linear_02} are satisfied for \(j=1,2\).  For \(j=3,\ldots,n\), both hand sides of \eqref{with_linear_01} and \eqref{with_linear_02} are zero.

It remains to verify \eqref{with_linear_03}.  Let \(\hat{F}(s)\) be defined by the right-hand side of \eqref{Ftan2}. By \eqref{FF2},
\begin{align*}
    \FF_0(p_1(s),p_2(s)) &= e^{i(\psi_1+\psi_2)} \hat{F}(s)\cdot\bigl( 1+i\frac{\af^2+\bt^2}{2\af\bt}\sinh(\frac{2s}{\af\bt}) \bigr) ~.
\end{align*}
A straightforward computation shows that
\begin{align*}
    F_{\ta}(p_1(s),\ldots,p_n(s)) &= \bigl(\prod_{k=3}^n\sec(\psi_k)\bigr)\hat{F}(s) ~, \\
    F_{\ta+\frac{\pi}{2}}(p_1(s),\ldots,p_n(s)) &= \bigl(\prod_{k=3}^n\sec(\psi_k)\bigr)\hat{F}(s) \frac{\af^2+\bt^2}{2\af\bt}\sinh(\frac{2s}{\af\bt}) ~, \\
    \frac{\pl F_\ta}{\pl p_j}(p_1(s),\ldots,p_n(s)) &= \bigl(\prod_{k=3}^n\sec(\psi_k)\bigr)\hat{F}(s) \cos(\psi_j) \Bigl( \sin(\psi_3) - \cos(\psi_3)\frac{\af^2+\bt^2}{2\af\bt}\sinh(\frac{2s}{\af\bt}) \Bigr)
\end{align*}
for \(j=3,\ldots,n\).  Therefore \eqref{entire_r} satisfies \eqref{with_linear_03}.  By examining the eigenvalues of \(\pl\bpl u\) at \(0=x_1=\cdots=x_n=s\), the phase \(\Ta = \sum_{j=1}^n\psi_j\).  It finishes the proof of this theorem.
\end{proof}

\begin{example}
    For any \(\Ta\in(-(n-1)\frac{\pi}{2},(n-1)\frac{\pi}{2})\), we set \(\psi = \Ta/n\), \(\af = \bt = 1\), and \(\gm_k = 0\). Define 
    \[p_\psi(s)=\frac{\sin\psi\cosh s + \cos\psi\sinh s}{\cos\psi\cosh s - \sin\psi\sinh s}.\] Then an entire solution $u$ of the LYZ equation with phase $\Theta$, defined on $\mathbb{C}^{n+1}$, is given by ,
    \begin{equation}\begin{split} u &= 2 p_\psi(s) \bigl((x_1)^2+(x_2)^2\bigr) + 2\tan\psi\sum_{k=3}^n(x_k)^2 - \sinh(2s)~.
     \end{split}\end{equation}
    It is of the form \eqref{ansatz_u} with $p_1(s)=p_2(s)=p_\psi(s) $ and $p_j(s)=\tan\psi, j=3, \cdots n$.  We compute
    \begin{align*}
        F_\Ta &= (\sec\psi)^{n-2}\frac{1}{(\cos\psi\cosh s - \sin\psi\sinh s)^2} ~,\\
        F_{\Ta+\frac{\pi}{2}} &= (\sec\psi)^{n-2}\frac{\sinh(2s)}{(\cos\psi\cosh s - \sin\psi\sinh s)^2} ~, \\
        \frac{\pl F_\Ta}{\pl p_j} &= (\sec\psi)^{n-2}\frac{\sin\psi\cosh s - \cos\psi\sinh s}{\cos\psi\cosh s - \sin\psi\sinh s} \text{     for \(j=1, 2\)}~.
    \end{align*}
    It is a direct computation to verify that \eqref{ODE_p} and \eqref{ODE_r} are satisfied.
    Note that the solution $u$ is defined on $\mathbb{C}^{n+1}$, and that \(\Ta\in(-(n-1)\frac{\pi}{2},(n-1)\frac{\pi}{2})\) covers the whole subcritical range. 
\end{example}

\section{Solutions to the special Lagrangian equation} \label{sec_sLag}

According to Leung--Yau--Zaslow in \cite{Leung-Yau-Zaslow}, a dHYM connection is mirror to a special Lagrangian sections via the Fourier-Mukai transform under the setting of semi-flat Calabi--Yau metrics.  If one works out the transformation with respect to the standard metric on $\BC^{n+1}$, each of the solutions of the dHYM/LYZ equation we obtained in previous sections corresponds to a solution of the special Lagrangian equation \eqref{SPL}.  In this section, we explore the geometry of the corresponding special Lagrangian submanifold.

\begin{prop} \label{prop_correspondence}
Let $p_1(s),\cdots,p_n(s)$ and $r(s)$ be solutions of the $\theta$-angle ODE system in Definition \ref{theta_ODE}. Consider the following function $f$ defined on a domain $X\subset \BR^{n+1}$ by
\begin{align*}
    f(x_1,\ldots, x_n, s)=\frac{1}{2}\sum_{j=1}^n p_j(s) x_j^2+r(s) ~.
\end{align*}
Then, $f$ satisfies the special Lagrangian equation with angle $\theta$:
\begin{align} \label{SPL}
    \im\left( e^{-i\ta}\det\left(\bfI_{n+1} + i\left[\frac{\partial^2 f}{\partial{x_j}\partial{x_k}}\right]_{1\leq j, k\leq n+1} \right)\right) &= 0 ~.
\end{align}
\end{prop}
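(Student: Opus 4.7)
The plan is to observe that the verification of Proposition \ref{prop_basic} is a purely linear-algebraic manipulation of the matrix \eqref{D2u}, and that the real Hessian $\nabla^2 f$ has \emph{exactly} the same block structure as that matrix. The proof of Proposition \ref{prop_correspondence} will then follow by running the same computation line-for-line on the real Hessian, with complex coordinates $z_j,\bar z_k$ replaced by real coordinates $x_j,x_k$.

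First I would compute $\nabla^2 f$ directly. Writing $s = x_{n+1}$, one has
\[
\frac{\partial^2 f}{\partial x_j\partial x_k} = p_j(s)\,\delta_{jk},\qquad
\frac{\partial^2 f}{\partial x_j\partial s} = p_j'(s)\,x_j,\qquad
\frac{\partial^2 f}{\partial s^2} = \tfrac12\sum_{j=1}^n p_j''(s)\,x_j^2 + r''(s),
\]
for $j,k\in\{1,\ldots,n\}$. These entries match those of \eqref{D2u} entry by entry; the normalization chosen in the statement ($f = \tfrac12\sum p_j x_j^2 + r$ versus $u = 2\sum p_j x_j^2 + 4r$ in \eqref{ansatz_u}) is precisely what makes the two Hessian matrices literally equal.

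Next I would apply Lemma \ref{lem_det} to $\bfI_{n+1} + i\nabla^2 f$. Because that lemma only uses the formal shape of the matrix, it produces the very same expression as in \eqref{I+iD2u}, namely
\[
\det(\bfI_{n+1} + i\nabla^2 f) \;=\; \FF\,(1 + iC) \;-\; i\sum_{j=1}^n x_j^2 (p_j')^2\,\frac{\partial\FF}{\partial p_j},
\]
with $\FF = \prod_{j=1}^n(1+ip_j)$ and $C = \tfrac12\sum_j p_j''\,x_j^2 + r''$. Writing $e^{-i\ta}\FF = F_\ta + iF_{\ta+\pi/2}$ and using the identity $\partial F_\ta/\partial p_j = \re(e^{-i\ta}\partial\FF/\partial p_j)$ obtained by differentiating the definition of $F_\ta$, a short expansion yields
\[
\im\bigl(e^{-i\ta}\det(\bfI_{n+1}+i\nabla^2 f)\bigr) \;=\; F_{\ta+\pi/2} \;+\; F_\ta\cdot C \;-\; \sum_{j=1}^n x_j^2(p_j')^2\,\frac{\partial F_\ta}{\partial p_j}.
\]

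Finally, I would substitute the ODE \eqref{ODE_p} in the form $\tfrac12 F_\ta\,p_j'' = (\partial F_\ta/\partial p_j)\,(p_j')^2$ and the ODE \eqref{ODE_r} in the form $F_\ta\,r'' = -F_{\ta+\pi/2}$. These turn $F_\ta\cdot C$ into $\sum_j x_j^2(p_j')^2\,\partial F_\ta/\partial p_j - F_{\ta+\pi/2}$, which cancels the other two terms exactly, and \eqref{SPL} follows. There is no essential obstacle here: the argument is a direct transcription of the dHYM derivation in Section \ref{sec_LYZ}, and the only care required is to keep the scaling convention straight so that the complex Hessian of $u$ and the real Hessian of $f$ really do coincide as matrices.
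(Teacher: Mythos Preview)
Your proposal is correct and follows essentially the same approach as the paper: the paper's proof simply computes the real Hessian of $f$, notes that it coincides entry-by-entry with the matrix \eqref{D2u}, and then defers to the computation in Section~\ref{sec_LYZ}. You have carried out that deferred computation explicitly and carefully, which is fine; there is no substantive difference in method.
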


\begin{proof}
Write $s$ as $x_{n+1}$.  The Hessian matrix of $f$ is
\begin{align*}
    \left[\frac{\partial^2 f}{\partial x_j \partial x_k} \right]_{1 \leq j,k \leq n+1} =
\begin{bmatrix}
p_1(s) & 0 & \cdots & 0 & x_1 p_1'(s) \\
0 & p_2(s) & \cdots & 0 & x_2 p_2'(s) \\
\vdots & \vdots & \ddots & \vdots & \vdots \\
0 & 0 & \cdots & p_n(s) & x_n p_n'(s) \\
x_1 p_1'(s) & x_2 p_2'(s) & \cdots & x_n p_n'(s) & \sum_{j=1}^n \frac{1}{2} p_j''(s) x_j^2 + r''(s)
\end{bmatrix} ~.
\end{align*}
With this, the computation is the same as that in Section~\ref{sec_LYZ}.
\end{proof}





The correspondence also holds true for the more general ansatz \eqref{general_potential}.  To be more precise, suppose that $p_j(s)$, $q_j(s)$ and $r(s)$ obey the system of equations \eqref{with_linear_01}, \eqref{with_linear_02} and \eqref{with_linear_03}.  Then, $f = \sum_{j=1}^n (\oh p_j(s)(x_j)^2 + q_j(s)x_j) + r(s)$ satisfies \eqref{SPL}.  Therefore, Proposition~\ref{prop_entire3} and Theorem~\ref{thm_entiren_sub} lead to the following theorem:

\begin{thm} \label{thm_slag_entire}
    \begin{itemize}
        \item For any $\ta\in[-\frac{\pi}{2},\frac{\pi}{2}]$, the special Lagrangian equation with angle $\theta$ \eqref{SPL} admits non-polynomial entire solutions on $\BR^{3}$.  The phase of the solution is exactly $\ta$.
        \item For \(n\geq 3\) and any \(|\ta| < (n-1)\frac{\pi}{2}\), the special Lagrangian equation with angle $\theta$ \eqref{SPL} admits non-polynomial entire solutions on $\BR^{n+1}$.  The phase of the solution is $\ta$.
    \end{itemize}
\end{thm}

\subsection{Extensions of solutions to the special Lagrangian equation}

According to Proposition~\ref{prop_correspondence}, the graph of $\nabla f$ defines a special Lagrangian submanifold in $\mathbb{C}^{n+1}$, which is graphical on a domain $X \subset \mathbb{R}^{n+1}$.  However, $\nabla f$ may blow up at the boundary of $X$, and the corresponding special Lagrangian submanifold ceases to be graphical.  In this subsection, we demonstrate that the submanifold admits a global extension (cf.\ \cites{Harvey-Lawson, Lawlor, Joyce}) as a complete (non-graphical), embedded, special Lagrangian in $\mathbb{C}^{n+1}$. It is natural to ask whether similar extension mechanisms can be applied to the dHYM/LYZ formulation on the mirror side.

We begin by recalling the following result of Joyce.

\begin{thm}[\cite{Joyce}*{Theorem\footnote{The dimension $m$ in Joyce's theorem corresponds to $n+1$ here.  For convenience, we specialize to the case $a = n$.  One can also work out the transformation for quadrics of other signature.} 7.1}] \label{thm_Joyce}
Let $w_1, \dots, w_n: (-\vep, \vep) \to \BC \setminus \{0\}$ and $\bt: (-\vep, \vep) \to \BC \setminus \{0\}$ be differentiable functions satisfying
\begin{align} \label{qu7eq3}
\begin{split}
\frac{\dd w_j}{\dd t} &= \overline{w_1 \cdots w_{j-1} w_{j+1} \cdots w_n}, \quad j = 1, \dots, n ~, \\
\frac{\dd\bt}{\dd t} &= \overline{w_1 \cdots w_n} ~.
\end{split}
\end{align}
Define a subset $N \subset \mathbb{C}^{n+1}$ by
\begin{align} \label{qu7eq4}
N = \left\{ \left(w_1(t)\xi_1, \dots, w_n(t)\xi_n, -\frac{\xi_1^2 + \dots + \xi_n^2}{2} + \beta(t) \right) : t \in (-\varepsilon, \varepsilon), \xi_j \in \mathbb{R} \right\}.
\end{align}
Then $N$ is a special Lagrangian submanifold of $\mathbb{C}^{n+1}$.
\end{thm}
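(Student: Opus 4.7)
The plan is to verify the two defining conditions of a special Lagrangian submanifold directly from the parametrization \eqref{qu7eq4}.  Recall that an $(n{+}1)$-real-dimensional submanifold $N\subset\BC^{n+1}$ is special Lagrangian (with phase zero modulo $\pi$) exactly when the K\"ahler form $\om = \frac{i}{2}\sum_{\ell=1}^{n+1} dz_\ell\w d\bar z_\ell$ and the imaginary part of the holomorphic volume form $\Om = dz_1\w\cdots\w dz_{n+1}$ both pull back to zero on $N$.  Denote the parametrization by
\begin{align*}
    \Phi(t,\xi_1,\ldots,\xi_n) &= \Bigl( w_1(t)\,\xi_1,\ldots,w_n(t)\,\xi_n,\; -\tfrac{1}{2}\sum_{k=1}^n\xi_k^2 + \bt(t) \Bigr) ~,
\end{align*}
so that $N$ is the image of $\Phi$; it suffices to prove $\Phi^*\om = 0$ and $\im\,\Phi^*\Om = 0$.

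The tangent vectors are read off directly: $\pl_{\xi_j}\Phi$ has $j$-th complex entry $w_j(t)$ and last entry $-\xi_j$ (with zeros elsewhere), while $\pl_t\Phi = (w_1'(t)\xi_1,\ldots,w_n'(t)\xi_n,\bt'(t))$.  Write $W = w_1\cdots w_n$ and $W_j = W/w_j$.  Using $\om(X,Y) = \im\sum_\ell\overline{X_\ell}\,Y_\ell$, the Lagrangian property splits into two checks.  For $\om(\pl_{\xi_j},\pl_{\xi_k})$ with $j\ne k$, the only overlap is in the last slot and contributes the real scalar $\xi_j\xi_k$.  For $\om(\pl_{\xi_j},\pl_t)$, the nonzero overlap is at positions $j$ and $n{+}1$, giving $\xi_j\bigl(\overline{w_j}\,w_j' - \bt'\bigr)$; substituting the ODE system \eqref{qu7eq3} yields $\overline{w_j}\,w_j' = \overline{w_j W_j} = \overline{W} = \bt'$, so the bracket vanishes and $\Phi^*\om \equiv 0$.

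For the special condition, I would express $\Phi^*\Om$ in the basis $(dt,d\xi_1,\ldots,d\xi_n)$ as the determinant of the $(n{+}1)\times(n{+}1)$ matrix whose $j$-th row ($j\leq n$) has $\xi_j w_j'$ in column $1$ and $w_j$ in column $j{+}1$ (and zeros elsewhere), and whose last row is $(\bt',-\xi_1,\ldots,-\xi_n)$.  Cofactor expansion along the last row, followed by substitution of $w_j' = \overline{W_j}$ and $\bt' = \overline{W}$, collapses the expression into
\begin{align*}
    \Phi^*\Om &= (-1)^n\Bigl(|W|^2 + \sum_{j=1}^n\xi_j^2\,|W_j|^2\Bigr)\,dt\w d\xi_1\w\cdots\w d\xi_n ~,
\end{align*}
a real $(n{+}1)$-form.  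Hence $\im\,\Phi^*\Om\equiv 0$, which is the special Lagrangian condition, and the coefficient is nowhere vanishing so $\Phi$ is an immersion.

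The main bookkeeping obstacle is keeping track of signs in the cofactor expansion and confirming that each cross term between $dt$ and some $d\xi_k$ coming from distinct $dz_j$'s assembles into the manifestly nonnegative real expression above; once the ODE system \eqref{qu7eq3} is applied, the identities $\overline{w_j}\,w_j' = \overline{W}$ and $w_j'\,\overline{w_j'} = |W_j|^2$ render the cancellations and recombinations transparent.
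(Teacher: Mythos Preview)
Your argument is correct.  The Lagrangian check is exactly as you say: the only nontrivial pairing is $\om(\pl_{\xi_j}\Phi,\pl_t\Phi)$, and the ODE~\eqref{qu7eq3} forces $\overline{w_j}\,w_j'=\overline W=\bt'$, so it vanishes.  Your determinant computation is also right; the cofactor expansion along the last row gives
\[
\det J \;=\; (-1)^n\Bigl(\bt'\,W + \sum_{k=1}^n \xi_k^2\,w_k'\,W_k\Bigr)
\;=\; (-1)^n\Bigl(|W|^2 + \sum_{k=1}^n \xi_k^2\,|W_k|^2\Bigr),
\]
so $\Phi^*\Om$ is real and nowhere zero, yielding both the special condition and the immersion.

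As for comparison with the paper: there is nothing to compare.  The paper does not prove this theorem; it quotes it from Joyce \cite{Joyce}*{Theorem~7.1} as a known input and uses it only as a target to match the graphical special Lagrangians constructed in Section~\ref{sec_sLag}.  Your direct verification via pullback of $\om$ and $\Om$ is precisely the standard calibration argument (and is in the same spirit as Joyce's original proof), so supplying it here is a reasonable self-contained addition rather than an alternative to anything in the present paper.
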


To relate our construction with Joyce’s theorem, we begin with the expression for the graph of $\nabla f$:
\begin{align*}
(x_1, \ldots, x_n, s) \mapsto \left((1 + i p_1(s)) x_1, \ldots, (1 + i p_n(s)) x_n, s + i\left( \frac{1}{2} \sum_{j=1}^n x_j^2 p_j'(s) + r'(s) \right) \right)
\end{align*}
where $p_1(s),\ldots,p_n(s)$ and $r(s)$ satisfy \eqref{ODE_p} and \eqref{ODE_r}.  The angle $\ta$ will be specified later.  Moreover, assume that the constant \eqref{conserve0} is positive,
\begin{align*}
\frac{\prod_{j=1}^n p_j'(s)}{(F_\theta)^2} = \kp > 0 ~,~ \text{ and assume that }~ p_j'(s) > 0
\end{align*}
for $j=1,\ldots,n$.  Other cases can be treated similarly by appropriately adjusting signs, and they correspond to quadrics of other signatures.

We claim that our ansatz corresponds to the solution given by Theorem~\ref{thm_Joyce} through the following relation:
\begin{align}
\label{cor-eqn}
\begin{split}
&\left((1 + i p_1(s)) x_1, \ldots, (1 + i p_n(s)) x_n, s + i\left( \frac{1}{2} \sum_{j=1}^n x_j^2 p_j'(s) + r'(s) \right)\right) \\
&= -i \left(w_1(t) \xi_1, \ldots, w_n(t) \xi_n, -\frac{\xi_1^2 + \cdots + \xi_n^2}{2} + \beta(t)\right) ~.
\end{split}
\end{align}
To facilitate this, introduce the parameters $(\xi_1, \ldots, \xi_n, t)$ related to $(x_1, \ldots, x_n, s)$ by
\begin{align*}
t = \sqrt{\kp} s \quad\text{and}\quad \xi_j = \sqrt{p_j'(s)}\,x_j
\end{align*}
for $j=1\ldots,n$.
Define the complex-valued functions $\omega_j(t)$ and $\beta(t)$ by
\begin{align*}
\omega_j(t) &= i \left( \frac{1 + i p_j(s)}{\sqrt{p_j'(s)}} \right) ~, \\
\beta(t) &= i(s + i r'(s)) ~.
\end{align*}


It remains to verify that this parameterization satisfies the ODE system \ref{qu7eq3}.  By \eqref{ODE_p},
\begin{align*}
    \frac{\dd}{\dd s} \left( \frac{1 + i p_j(s)}{\sqrt{p_j'(s)}} \right) &= i \sqrt{p_j'} - (1 + i p_j) \sqrt{p_j'} \cdot \frac{\partial_{p_j} F_\theta}{F_\theta}.
\end{align*}
With the identity $F_\theta - i F_{\theta + \frac{\pi}{2}} = e^{i\theta} \bar{F}$ and the relation $p_j F_\theta - (1 + p_j^2) \partial_{p_j} F_\theta = F_{\theta + \frac{\pi}{2}}$ (see Lemma \ref{lem_Ft}), it can be simplified as
\begin{align*}
    \frac{\dd}{\dd s} \left( \frac{1 + i p_j}{\sqrt{p_j'}} \right) = \frac{\sqrt{p_j'}}{1 - i p_j} \cdot \frac{i e^{i\theta} \bar{F}}{F_\theta} ~,
\end{align*}
and hence
\begin{align*}
    \frac{\dd\om_j}{\dd t} = \frac{i}{\bar{\om}_j} \cdot \overline{ \left( \frac{e^{-i\ta} F}{F_\ta} \right) } \cdot \frac{\dd s}{\dd t} ~.
\end{align*}
Similarly, it follows from \eqref{ODE_r} that
\begin{align*}
    \frac{\dd\bt}{\dd t} = i (1 + i r''(s)) \frac{\dd s}{\dd t} = i \overline{ \left( \frac{e^{-i\ta} F}{F_\ta} \right) } \cdot \frac{\dd s}{\dd t} ~.
\end{align*}

On the other hand,
\begin{align*}
    \omega_1 \cdots \omega_n = \prod_{j=1}^n \left( \frac{i(1 + i p_j(s))}{\sqrt{p_j'(s)}} \right) = \frac{i^n F}{\sqrt{\kp} F_\ta} ~.
\end{align*}
Therefore, $\omega_j(t)$ and $\bt(t)$ satisfy
\begin{align*}
    \frac{\dd \om_j}{\dd t} = i \overline{ \left( \frac{e^{-i\ta} \om_1 \cdots \om_n}{i^n \om_j} \right) }
    \quad\text{ and }\quad
    \frac{\dd\bt}{\dd t} = i \overline{ \left( \frac{e^{-i\ta} \om_1 \cdots \om_n}{i^n} \right) } ~.
\end{align*}
Finally, by choosing the angle $\ta$ such that $e^{i\ta} i^n = -i$, we recover the ODE system \eqref{qu7eq3}.  This verifies the correspondence \ref{cor-eqn}.

\section{General equations of recursive type} \label{sec_general}

In this section, we fix real constants $c_n,\ldots,c_{-1}$ and study solutions of a general real Hessian equation of the form
\begin{align}
    c_n\sm_{n+1}(\nabla^2 f) + c_{n-1}\sm_n(\nabla^2 f) + \cdots + c_0\sm_1(\nabla^2 f) + c_{-1} &= 0.
\label{rH_eqn} \end{align}
Denote the coordinate of $\BR^{n+1}$ by $(x_1,\ldots,x_n,s)$, and consider the following ansatz for $f$:
\begin{align}
    f &= \frac{1}{2}\sum_{j=1}^n p_j(s) x_j^2+ r(s) ~.
\label{ansatz_f1} \end{align}

We also consider solutions of a general complex Hessian equation of the form
\begin{align}
    c_n\sm_{n+1}(\partial\bar{\partial} u) + c_{n-1}\sm_n(\partial\bar{\partial} u) + \cdots + c_0\sm_1(\partial\bar{\partial} u) + c_{-1} &= 0.
\label{cH_eqn} \end{align}
Denote the coordinate of $\mathbb{C}^{n+1}$ by $(z_1,\ldots,z_n,z_{n+1})$, and consider the following ansatz for $u$:
\begin{align}
    u &= 2\sum_{j=1}^n p_j(\re z_{n+1}) (\re z_i)^2 + 4r(\re z^{n+1}) ~.
\label{ansatz_u1} \end{align}

\begin{prop}\label{general_ODE} 
Suppose $p_i(s), i=1\ldots n$ and $r(s)$ satisfy the following ODE system   
\begin{align}
    \frac{p_i''}{2}\cdot F(p_1,\ldots,p_n)  &= (p_i')^2 \frac{\pl F}{\pl p_i}(p_1,\ldots,p_n)  \quad\text{for $i=1,\ldots,n$ and} \label{ODE_pi_g} \\
    r''\cdot F(p_1,\ldots,p_n)  &= -G(p_1,\ldots,p_n), \label{ODE_r_g} 
\end{align} where
\begin{align*}
    F(p_1,\ldots,p_n) = \sum_{k=0}^n c_k\sm_k(p) \quad\text{and}\quad G(p_1,\ldots,p_n) = \sum_{k=0}^n c_{k-1}\sm_k(p) ~. 
\end{align*}
Then a function $u$ of the form \eqref{ansatz_u1} satisfies the complex Hessian equation \eqref{cH_eqn}, and a function $f$ of the form \eqref{ansatz_f1} satisfies the real Hessian equation \eqref{rH_eqn}.

\end{prop}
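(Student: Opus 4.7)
My plan is to reduce the statement to a single generating function identity and then read off both sides by matching coefficients of $t$.

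The first step is to record the Hessian matrix $M$ of $f$ (respectively the complex Hessian of $u$). These are computed in the paper: in both cases $M$ has an $n\times n$ diagonal block $\mathrm{diag}(p_1,\ldots,p_n)$, a last-row/column vector with $j$-th entry $x_j p_j'(s)$ (or $(\re z_j)p_j'(s)$), and a corner entry
\[
h \;=\; \tfrac{1}{2}\sum_{j=1}^n p_j''(s)\,x_j^{\,2} + r''(s).
\]
By the Schur-complement determinant formula (essentially the content of Lemma~\ref{lem_det} in the appendix), I would next write
\[
\det(\bfI_{n+1} + tM) \;=\; (1 + th)\prod_{j=1}^n(1+tp_j)\;-\;t^2\sum_{j=1}^n x_j^{\,2}(p_j')^2\prod_{k\neq j}(1+tp_k).
\]
Expanding the two products as $\sum_m t^m\sm_m(p)$ and $\sum_m t^m\sm_m(p\,|\,\hat j)$, I read off
\[
\sm_m(M)\;=\;\sm_m(p)\;+\;h\,\sm_{m-1}(p)\;-\;\sum_{j=1}^n x_j^{\,2}(p_j')^2\,\sm_{m-2}(p\,|\,\hat j).
\]

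The second step is to assemble the left-hand side of \eqref{rH_eqn} (or \eqref{cH_eqn}). Setting $\sm_0(M)=1$ and indexing by $m = k+1$, the equation becomes $\sum_{m=0}^{n+1}c_{m-1}\sm_m(M)=0$. Substituting the identity above and reshuffling indices, I obtain three clean pieces:
\[
\sum_m c_{m-1}\sm_m(p)=G(p),\quad
\sum_m c_{m-1}\sm_{m-1}(p)=F(p),\quad
\sum_m c_{m-1}\sm_{m-2}(p\,|\,\hat j)=\frac{\pl F}{\pl p_j}(p),
\]
where the last equality uses the standard symmetric-function identity $\pl\sm_k(p)/\pl p_j=\sm_{k-1}(p\,|\,\hat j)$. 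Consequently
\[
\sum_{m=0}^{n+1}c_{m-1}\sm_m(M)\;=\;G(p)\;+\;h\,F(p)\;-\;\sum_{j=1}^n x_j^{\,2}(p_j')^2\,\frac{\pl F}{\pl p_j}(p).
\]

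The final step is to plug in $h=\tfrac12\sum_j p_j''(s)x_j^{\,2}+r''(s)$ and regroup by powers of $x_j$. The $x_j^{\,2}$ coefficient is $\tfrac12 p_j''(s)F(p)-(p_j')^2\pl F/\pl p_j$, which vanishes by \eqref{ODE_pi_g}; the constant-in-$x$ part is $G(p)+r''(s)F(p)$, which vanishes by \eqref{ODE_r_g}. Hence the whole expression is identically zero, proving the real case. For the complex case I would repeat verbatim, replacing the real Hessian by $\pl\bpl u$ and using that the block-determinant expansion already carried out in \eqref{I+iD2u} is of exactly the same form (with the factor $i$ absorbed into the parameter $t$). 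I do not anticipate any serious obstacle; the main care is purely bookkeeping in the index shifts and in making sure that $\sm_{n+1}(p)=0$ (since $p$ has only $n$ components) is used correctly when identifying $\sum_m c_{m-1}\sm_m(p)$ with $G(p)$.
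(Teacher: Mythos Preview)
Your proof is correct and follows essentially the same route as the paper: both use Lemma~\ref{lem_det} to expand the determinant of the arrowhead Hessian, then collect the $x_j^{\,2}$ and constant terms and match them against the ODE hypotheses \eqref{ODE_pi_g} and \eqref{ODE_r_g}. The only difference is presentational---you organize the bookkeeping through the generating function $\det(\bfI_{n+1}+tM)$ and read off $\sm_m(M)$ as the $t^m$ coefficient, whereas the paper simply writes down the resulting $\Xi_0$ and $\Xi_i$ directly; the underlying computation is identical.
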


\begin{proof} The proof extends the argument of Proposition \ref{prop_basic} (the dHYM/LYZ case) and Proposition \ref{prop_correspondence} (the special Lagrangian case). We only deal with the real case \eqref{rH_eqn} here and the complex case \eqref{cH_eqn} can be dealt with similarly.
    By Lemma~\ref{lem_det}, \eqref{rH_eqn}, under the ansatz \eqref{ansatz_f1}, becomes
\begin{align}
    0 &= \Xi_0(s) + \sum_{i=1}^n(x^i)^2 \Xi_i(s)
\label{realH_eqn1} \end{align} where
\begin{align*}
    \Xi_0(s) &= \left( c_{n-1}\sm_n(p) + \cdots + c_{0}\sm_1(p) + c_{-1} \right) + \left( c_n\sm_n(p) + \cdots + c_1\sm_1(p) + c_0 \right) r'' \quad\text{and}  \\
    \Xi_i(s) &= \left( c_n\sm_n(p) + \cdots + c_1\sm_1(p) + c_0 \right) \frac{p_i''}{2} - \left( c_n\sm_{n-1}(p|i) + \cdots + c_1\sm_1(p|i) + c_1 \right) (p_i')^2  ~.
\end{align*}
As before, $\sm_k(p|i) = \sm_k(p_1,\ldots,\widehat{p_i},\ldots,p_n)$.  Note that \eqref{realH_eqn1} is equivalent to $\Xi_i(s) = 0 = \Xi_0(s)$ for $i=1,\ldots,n$.
\end{proof}

Again, the main task is to solve \eqref{ODE_pi_g} for $p_i$, and then $r$ can be found by integrating \eqref{ODE_r_g} twice.  Let $R_i(s)$ be $1/p_i'(s)$, and \eqref{ODE_pi_g} becomes the following first order system

\begin{equation}\begin{cases} 
    p_i' &= \frac{1}{R_i} ~,\\
    R_i' &= -2\frac{\pl \log F}{\pl p_i}
 \label{ODE_pR} \end{cases}\end{equation}
for $i=1,\ldots,n$.

\begin{rmk} \label{rmk_first_integral}
It is not hard to see that with the symplectic form $\sum_{i=1}^n \dd p_i \w \dd R_i$, the ODE system is Hamiltonian with respect to $H(p_i, R_i)=\sum_{i=1}^n \log R_i +2\log F$ and thus $R_1\cdots R_n F^2$ is a first integral. However, since there is no other continuous symmetry of $H$ for general $F$, this perspective is not particularly useful.  
\end{rmk}

\begin{defn}
    The ODE system \eqref{ODE_pi_g} is said to be of recursive type $(a_0, a_1)$ if there exist real numbers $a_0$ and $a_1$ such that the coefficients of $F$ satisfy the recursive relation:
    \begin{align*}
        c_{k-1} = c_k a_1 - c_{k+1} a_0 \quad \text{for } k=1, \dots, n-1 ~.
    \end{align*}
\end{defn}

In particular, $c_0, c_1, \cdots, c_{n-2}$ are determined by $a_0, a_1, c_{n-1}$ and $c_n$.
All recursive types $F$ can be classified according to the following proposition ($c_n$ and $c_{n-1}$ are not necessarily real in this proposition): 

\begin{prop}\label{classification}
Let $n \geq 1$ and let $a_0, a_1 \in\BR$.  Suppose
\begin{align*}
    F(p_1,\dots,p_n) &= \sum_{k=0}^n c_k\,\sigma_k(p_1,\dots,p_n)
\end{align*}
is a symmetric polynomial in $(p_1,\dots,p_n)$, where $\sm_k$ denotes the $k$-th elementary symmetric function. Assume the coefficients $\{c_k\}$ satisfy the recurrence
\begin{align*}
    c_{k-1} &= a_1\,c_k - a_0\,c_{k+1}~, \qquad k = 1,2,\dots,n-1 ~.
\end{align*}

Let \(r_1, r_2\) be the (not necessarily distinct) roots of the quadratic equation
\begin{align*}
    r^2 - a_1 r + a_0 &= 0 ~.
\end{align*}
Then \(F\) must take one of the following forms:

\underline{Case 1}: \(r_1 \neq r_2\).
\begin{align*}
    F(p_1,\dots,p_n) &= A \prod_{i=1}^n (p_i + r_1) + B \prod_{i=1}^n (p_i + r_2) ~,
\end{align*}
where the constants \(A, B\) are given by
\begin{align*}
    A = \frac{c_{n-1} - c_n r_2}{r_1 - r_2} \quad\text{and}\quad B = \frac{-(c_{n-1}-c_n r_1)}{r_1 - r_2} ~.
\end{align*}

\underline{Case 2}: \(r_1 = r_2 = u \neq 0\).
\begin{align*}
    F(p_1,\dots,p_n) &= A \prod_{i=1}^n (p_i + u) + B u \cdot \frac{\dd}{\dd u} \Bigl( \prod_{i=1}^n (p_i + u) \Bigr) ~,
\end{align*}
where \(A = c_n\) and \(B = \frac{c_{n-1}}{u} - c_n\).

\underline{Case 3}: \(r_1 = r_2 = 0\).
\begin{align*}
    F(p_1,\dots,p_n) &= c_n\,\sigma_n(p_1,\dots,p_n) +c_{n-1}\,\sigma_{n-1}(p_1,\dots,p_n) ~.
\end{align*}
\end{prop}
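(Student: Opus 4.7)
The plan is to recognize the constraint $c_{k-1} = a_1 c_k - a_0 c_{k+1}$ for $k=1,\dots,n-1$ as a linear second-order recurrence, run \emph{backwards} from the initial data $(c_n, c_{n-1})$. Since these two values determine the entire sequence $(c_0,\dots,c_n)$, the space of admissible coefficient tuples is two-dimensional, so the classification amounts to exhibiting two independent solutions and matching constants. I would test the ansatz $c_k = r^{n-k}$: substituting into the recurrence and dividing by $r^{n-k-1}$ produces exactly $r^2 - a_1 r + a_0 = 0$, recovering the characteristic equation of the statement. The decisive observation is that this exponential-type sequence is packaged by a recognisable symmetric polynomial, namely
\begin{align*}
\sum_{k=0}^n r^{n-k}\sigma_k(p_1,\dots,p_n) = \prod_{i=1}^n (p_i + r),
\end{align*}
giving a closed-form solution of the original polynomial problem for every root $r$.

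The three cases then follow from the usual second-order-recurrence dichotomy. In Case~1, the sequences $\{r_1^{n-k}\}$ and $\{r_2^{n-k}\}$ are linearly independent and thus span the solution space, so one writes $c_k = A r_1^{n-k} + B r_2^{n-k}$ and assembles $F = A\prod(p_i+r_1) + B\prod(p_i+r_2)$; the explicit $A, B$ come from Cramer applied to the system $A+B=c_n$, $A r_1+B r_2=c_{n-1}$. In Case~2, I would manufacture a second independent solution by differentiating the one-parameter family $c_k(r) = r^{n-k}$ in $r$ at $r = u$, giving $(n-k) u^{n-k-1}$; after rescaling by $u \neq 0$, this is realised as $u\,\frac{d}{du}\prod(p_i+u)$. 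Matching the $\sigma_n$ and $\sigma_{n-1}$ coefficients then yields $A = c_n$ and $B = c_{n-1}/u - c_n$. In Case~3 both $a_0$ and $a_1$ vanish, so the recurrence collapses to $c_{k-1} = 0$ for $1 \leq k \leq n-1$, leaving $F = c_n \sigma_n + c_{n-1}\sigma_{n-1}$.

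The one place requiring genuine verification — the mild ``obstacle'' — is the repeated-root case: one must confirm that the differentiated sequence $(n-k) u^{n-k-1}$ really does satisfy the recurrence, which is a direct algebraic check using $a_1 = 2u$ and $a_0 = u^2$, and that it is linearly independent of $u^{n-k}$, which is immediate from its polynomial dependence on $k$. Everything else is routine bookkeeping, and the dimension count at the outset guarantees that the two exhibited solutions exhaust the admissible family of $F$'s in every case.
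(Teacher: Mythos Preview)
Your proposal is correct and follows essentially the same approach as the paper: in each case you exhibit the appropriate basis of the two-dimensional solution space of the recurrence, invoke the generating-function identity $\sum_k r^{n-k}\sigma_k = \prod_i(p_i+r)$ (and its $u$-derivative in the repeated-root case), and match to the initial data $(c_n,c_{n-1})$. Your write-up supplies more conceptual motivation (the dimension count and the characteristic-equation heuristic) than the paper's terse verification, but the underlying argument is the same.
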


\begin{proof}
\underline{Case 1}: \(r_1 \neq r_2\).
One can verify directly that the sequence \(B_k = A r_1^{n-k} + B r_2^{n-k}\) satisfies the recurrence
$c_{k-1} = a_1\,c_k - a_0\,c_{k+1}$ for $k = 1,2,\dots,n-1$, with initial conditions \(B_n = c_n\), \(B_{n-1} = c_{n-1}\).  Moreover, the identity
\begin{align*}
    \sum_{k=0}^n B_k\,\sigma_k(p_1,\dots,p_n) &= A \prod_{i=1}^n (p_i + r_1) + B \prod_{i=1}^n (p_i + r_2)
\end{align*}
follows from the generating function for elementary symmetric polynomials.

\underline{Case 2}: \(r_1=r_1=u\neq0\).
In this case, the recurrence becomes \(c_{k-1} = 2u\,c_k - u^2\,c_{k+1}\). It is not hard to verify that
$B_k = A u^{n-k} + B(n - k) u^{n-k}$ satisfies the recurrence, with initial conditions \(B_n = c_n\), \(B_{n-1} = c_{n-1}\). Furthermore,
\begin{align*}
    \sum_{k=0}^n B_k\,\sigma_k(p_1,\dots,p_n) &= A \prod_{i=1}^n (p_i + u) + B u \cdot \frac{\dd}{\dd u} \Bigl( \prod_{i=1}^n (p_i + u) \Bigr)
\end{align*}
follows from term-wise differentiation of the generating polynomial.

\underline{Case 3}: $r_1=r_2=0$.
In this case, the recurrence becomes \(c_{k-1} = 0\), implying \(c_0 = \cdots = c_{n-2} = 0\). Only \(c_{n-1}\) and \(c_n\) may be nonzero, and thus $F(p) = c_n \sigma_n(p)+c_{n-1} \sigma_{n-1}(p)$.
\end{proof}

In case 1, by setting $a_0=1, a_1=0$, we have $r_1=i, r_2=-i$ , and this corresponds to the dHYM/LYZ equation in the complex case and the special Lagrangian equation in the real case. 
Case 3 gives the Monge--Amp\`ere equation and the $J$-equation.

\begin{thm}
   Suppose the ODE system \eqref{ODE_pi_g} is of recursive type $(a_0, a_1)$. Define
\begin{align*}
    \xi_i = \frac{p_i^2 + a_1 p_i + a_0}{p'_i} ~, \quad i=1, \ldots, n~, 
\end{align*}
then
\begin{align*}
    \xi_i - \xi_1 ~, \quad i=2, \ldots, n
\end{align*}
are first integrals of the system.  In addition, $\xi_i$, $i=1,\ldots,n$ satisfy the following ODE system: \begin{align*}
 (\xi_i)'^2=(a_1^2-4a_0)+\frac{4}{\kp}(c_{n-1}^2-a_1 c_{n-1}c_n+a_0 c_n^2) \prod_{j=1}^n\xi_j~,
\end{align*}
where $\kp$ is the constant such that $F^2=\kp\,p_1'\cdots p_n'$.
\end{thm}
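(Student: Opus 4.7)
The plan is to mimic, at a general level, the strategy used for the dHYM/LYZ equation in Section~\ref{sec_LYZ}, where the recursive type $(a_0, a_1) = (1, 0)$ was exploited via the key algebraic identity in Lemma~\ref{lem_Ft}(iii).  I would proceed in three steps.

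\textbf{Step 1 (A key algebraic identity).}  The first step is to establish the general analogue of Lemma~\ref{lem_Ft}(iii):
\begin{align*}
    (p_j^2 + a_1 p_j + a_0)\,\pl_{p_j} F &= p_j\,F + \td G~,
\end{align*}
where $\td G$ is a polynomial in $p_1,\ldots,p_n$ that is \emph{independent of $j$}.  In Case~1 of Proposition~\ref{classification}, with $F = A\prod_i(p_i + r_1) + B\prod_i(p_i + r_2)$, this is a direct computation: the factorization $p_j^2 + a_1 p_j + a_0 = (p_j + r_1)(p_j + r_2)$ cancels the denominator produced by $\pl_{p_j}$, yielding
\begin{align*}
    \td G &= r_2\,A\,\prod_i(p_i+r_1) + r_1\,B\,\prod_i(p_i+r_2)~.
\end{align*}
Cases~2 and~3 of Proposition~\ref{classification} are handled similarly, or by continuity from Case~1.

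\textbf{Step 2 (First integrals $\xi_i - \xi_1$).}  Rewriting \eqref{ODE_pi_g} as $p_j''/(p_j')^2 = 2\pl_{p_j} F / F$ and differentiating $\xi_j = (p_j^2 + a_1 p_j + a_0)/p_j'$, I would get
\begin{align*}
    \xi_j' &= (2p_j + a_1) - (p_j^2 + a_1 p_j + a_0)\,\frac{p_j''}{(p_j')^2} = a_1 - \frac{2\td G}{F}~.
\end{align*}
The right-hand side is independent of $j$, so each difference $\xi_i - \xi_1$ is constant, giving $n-1$ first integrals.  Combined with the Hamiltonian $F^2/\prod_i p_i' = \kp$ from Remark~\ref{rmk_first_integral}, this renders the system integrable.

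\textbf{Step 3 (Scalar ODE for $\xi_j$).}  Squaring the previous display gives
\begin{align*}
    (\xi_j')^2 - (a_1^2 - 4a_0) &= \frac{4\bigl(\td G^2 - a_1\,F\,\td G + a_0\,F^2\bigr)}{F^2}~.
\end{align*}
The quadratic form $X^2 - a_1 XY + a_0 Y^2$ factors as $(X - r_1 Y)(X - r_2 Y)$.  Substituting $F = AP + BQ$ and $\td G = r_2 AP + r_1 BQ$ (with $P = \prod_i(p_i+r_1)$, $Q = \prod_i(p_i+r_2)$), the vanishing of $r_i^2 - a_1 r_i + a_0$ kills the $P^2$ and $Q^2$ terms, while Vieta's relations $r_1 + r_2 = a_1$, $r_1 r_2 = a_0$ collapse the cross term to $(4a_0 - a_1^2)\,AB\,PQ$.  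Using $\prod_i \xi_i = \kp\,PQ/F^2$ and computing $AB$ from the explicit formulas in Proposition~\ref{classification} via $(r_1 - r_2)^2 = a_1^2 - 4a_0$ converts the identity into the stated form in $(a_0, a_1, c_{n-1}, c_n, \kp)$.

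The main obstacle I anticipate lies in the degenerate subcases — repeated roots ($r_1 = r_2$) and the highly-degenerate $a_0 = a_1 = 0$ case — where the clean ``$(A, B, r_1, r_2)$-language'' above becomes singular.  These should be treatable either by redoing Step~1 directly using the alternative forms of $F$ in Cases~2 and~3 of Proposition~\ref{classification}, or by a continuity argument after clearing denominators involving $(r_1 - r_2)$.
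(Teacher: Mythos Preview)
Your proposal is correct and follows essentially the same route as the paper: the paper packages your Steps~1 and~3 into Lemma~\ref{quadratic_lemma}, whose proof first establishes the $j$-independence of $\xi_j'$ (identity~\eqref{structure_id_0}, equivalent to your $\xi_j' = a_1 - 2\td G/F$) and then squares and simplifies case-by-case via Proposition~\ref{classification}.  The only minor difference is that the paper derives~\eqref{structure_id_0} by a uniform elementary-symmetric-polynomial manipulation rather than invoking the classification already at that stage.
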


\begin{proof}
Let $\xi_i= R_i (p_i^2+a_1p_i+a_0)$. It follows from a direct computation that
\begin{align*}
 \xi_i'= \frac{1}{F}  (-2 (p_i^2+a_1p_i+a_0)\frac{\partial F}{\partial p_i} + (2p_i+a_1)F).
\end{align*}
By Lemma \ref{quadratic_lemma},
\begin{align*}
    (\xi'_i)^2=(a_1^2-4a_0)+\frac{4}{F^2}(c_{n-1}^2-a_1 c_{n-1}c_n+a_0 c_n^2) \prod_{j=1}^n(p_j^2+a_1p_j+a_0)
\end{align*}
This together with $F^2=c\,p_1'\cdots p_n'$ and the definition of $\xi_i$ finishes the proof of this theorem.
\end{proof}

\subsection{Complete classification when $n=3$} \label{sec_3dnr}

In this subsection, we consider \eqref{ODE_pi_g} when $n=3$.  In this case, $F(p_1,p_2,p_3) = c_3\sm_3(p) + c_2\sm_2(p) + c_1\sm_1(p) + c_0$.  When $c_3c_1 - (c_2)^2 \neq 0$, it is easy to see that \eqref{ODE_pi_g} is of recursive type, and $(a_0,a_1)$ are unique.  It remains to analyze \eqref{ODE_pi_g} when $c_3c_1 = (c_2)^2$.  With Remark~\ref{rmk_first_integral}, $R_1R_2R_3\,F^2 = c$ for some constant $c$.  It suffices to identify two more independent first integrals.

\begin{prop} \label{prop_first_integral3}
    When $n=3$ and $c_3c_1 = (c_2)^2$, the ODE system \eqref{ODE_pi_g} admits the following first integrals.
    \begin{enumerate}
        \item If $c_3\neq0$, let $a = c_2/c_3$.  Then, $(p_1+a)R_1 - (p_j+a)R_j, j=2,3$ are first integrals.
        \item If $c_3 = 0$, $R_1 - R_j, j=2,3$ are first integrals.
    \end{enumerate}
\end{prop}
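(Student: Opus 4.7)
The strategy is to exploit the degeneracy $c_3 c_1 = c_2^2$ to write $F$ in a factored form in which $\pl_{p_i} F$ has a very simple dependence on $i$, so that certain natural combinations of $(p_i, R_i)$ satisfy the same ODE and their differences are automatically conserved. By Remark~\ref{rmk_first_integral} we already have one first integral $R_1R_2R_3 F^2$, so only two further independent first integrals need to be produced.

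In case (i), with $a = c_2/c_3$, the hypothesis $c_3 c_1 = c_2^2$ forces $c_1 = a^2 c_3$, and the elementary identity
\begin{equation*}
\prod_{i=1}^3(p_i+a) \;=\; \sm_3(p) + a\,\sm_2(p) + a^2\,\sm_1(p) + a^3
\end{equation*}
yields the factorization
\begin{equation*}
F \;=\; c_3 \prod_{i=1}^3(p_i+a) + (c_0 - a^3 c_3).
\end{equation*}
Writing $P = \prod_{i=1}^3(p_i+a)$, one then has $\pl_{p_i}F = c_3 P/(p_i+a)$. Plugging this into \eqref{ODE_pR} I expect
\begin{equation*}
\frac{\dd}{\dd s}\bigl((p_i+a)R_i\bigr) \;=\; p_i'R_i + (p_i+a)R_i' \;=\; 1 - \frac{2 c_3 P}{F},
\end{equation*}
whose right-hand side is manifestly independent of $i$. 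Consequently $(p_1+a)R_1 - (p_j+a)R_j$ is constant for $j=2,3$, giving the two desired first integrals.

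Case (ii) is simpler: if $c_3 = 0$, then $c_3 c_1 = c_2^2$ forces $c_2 = 0$, leaving $F = c_1(p_1+p_2+p_3) + c_0$. In particular $\pl_{p_i}F = c_1$ is independent of $i$, so the second equation of \eqref{ODE_pR} reads $R_i' = -2c_1/F$, depending only on $s$. Hence $R_1 - R_j$ is conserved for $j=2,3$.

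The argument is essentially algebraic, and the only step that requires insight is spotting the factorization in case (i); this mirrors the repeated-root branch of Proposition \ref{classification}, where the parameter $a = c_2/c_3$ can be checked to satisfy $a^2 - a_1 a + a_0 = 0$ for any $(a_0,a_1)$ compatible with the relations, but the extra freedom in $c_0$ puts us outside the recursive class. Once the factorization is in hand there is no real obstacle: everything reduces to a one-line computation against \eqref{ODE_pR}.
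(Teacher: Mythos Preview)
Your proposal is correct and follows essentially the same approach as the paper: both cases rest on exactly the factorizations \eqref{F_nonrecur1} and \eqref{F_nonrecur2}, after which the paper simply asserts that the derivative of $(p_j+a)R_j$ (respectively $R_j$) is independent of $j$, whereas you carry out that one-line computation explicitly. Your added remark linking $a=c_2/c_3$ to the repeated-root case of Proposition~\ref{classification} is a nice observation but not needed for the argument.
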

\begin{proof}
    \underline{Case (i)}.  In this case,
    \begin{align}
        F(p_1,p_2,p_3) &= c_3(p_1+a)(p_2+a)(p_3+a) + (c_0 - c_3a^3) ~. \label{F_nonrecur1}
    \end{align}
    With this, it follows from a direct computation that the derivative of $(p_j+a)R_j$ is independent of $j$.

    \underline{Case (ii)}.  In this case,
    \begin{align}
        F(p_1,p_2,p_3) &= c_1(p_1+p_2+p_3) + c_0 ~. \label{F_nonrecur2}
    \end{align}
    It is clear that the derivative of $R_j$ is independent of $j$.
\end{proof}

We now explain how to use Proposition~\ref{prop_first_integral3} to solve \eqref{ODE_pR}.  In case (i), denote $(p_j+a)R_j$ by $\xi_j$ for $j=1,2,3$.  Let $\kp_j$ be the constants $\xi_1-\xi_j$ for $j=2,3$.  The equation $R_1R_2R_3\,F^2 = c$ becomes
\begin{align}
    \xi_1(\xi_1 - \kp_2)(\xi_1 - \kp_3) = \frac{c(p_1+a)(p_2+a)(p_3+a)}{(F(p_1,p_2,p_3))^2} ~. \label{cubic_nonrecur1}
\end{align}

In other words, $\xi_1 = \xi_1(p_1,p_2,p_3)$ is the root of the cubic polynomial \eqref{cubic_nonrecur1} that is compatible with the initial condition of the system \eqref{ODE_pR}.  Therefore, \eqref{ODE_pR} reduces to the following autonomous system:
\begin{align*}
    p_1' &= \frac{p_1+a}{\xi_1(p_1,p_2,p_3)} ~,
    & p_2' &= \frac{p_2+a}{\xi_1(p_1,p_2,p_3) - \kp_2} ~,
    & p_3' &= \frac{p_3+a}{\xi_1(p_1,p_2,p_3) - \kp_3} ~.
\end{align*}

For care (ii), let $\kp_j$ be the constants $R_1-R_j$ for $j=2,3$.  A similar computation gives
\begin{align}
    R_1(R_1 - \kp_2)(R_1 - \kp_3) = \frac{c}{(F(p_1,p_2,p_3))^2} ~. \label{cubic_nonrecur2}
\end{align}
It follows that $R_1 = R_1(p_1,p_2,p_3)$ is the root of the cubic polynomial \eqref{cubic_nonrecur2} that is compatible with the initial condition of the system \eqref{ODE_pR}, and \eqref{ODE_pR} reduces to the following autonomous system:
\begin{align*}
    p_1' &= \frac{1}{R_1(p_1,p_2,p_3)} ~,
    & p_2' &= \frac{1}{R_1(p_1,p_2,p_3) - \kp_2} ~,
    & p_3' &= \frac{1}{R_1(p_1,p_2,p_3) - \kp_3} ~.
\end{align*}

\appendix
\section{Some Algebraic Calculations}

\begin{lem} \label{lem_det}
For the $(n+1)\times (n+1)$ Hermitian matrix
\begin{align*}
H_{n+1} &= \begin{bmatrix}
P_1 & 0 & \cdots & 0 & Q_1 \\  0 & P_2 & \cdots & 0 & Q_2 \\  \vdots & \vdots & \ddots & \vdots & \vdots \\
0 & 0 & \cdots & P_{n} & Q_{n} \\  \bar{Q}_1 & \bar{Q}_2 & \cdots & \bar{Q}_{n} & R
\end{bmatrix} ~,
\end{align*}
\begin{align*}
\det(\ld \bfI_{n+1} - H_{n+1}) &= (\ld-P_1)(\ld-P_2)\cdots(\ld-P_n)(\ld-R) \\
&\quad - \sum_{i=1}^{n} |Q_i|^2 (\ld-P_1)\cdots\widehat{(\ld-P_i)}\cdots(\ld-P_{n}) ~.
\end{align*}
\end{lem}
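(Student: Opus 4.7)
My plan is to compute $\det(\lambda I_{n+1} - H_{n+1})$ by a single Laplace expansion along the last row, leveraging the ``arrowhead'' sparsity of $H_{n+1}$. Each of the first $n$ rows of $\lambda I_{n+1} - H_{n+1}$ has exactly two nonzero entries, namely $\lambda - P_i$ on the diagonal and $-Q_i$ in the last column, while the bottom row equals $(-\bar Q_1, \ldots, -\bar Q_n, \lambda - R)$. This is the classical characteristic-polynomial computation for arrowhead matrices, so the structure of the argument is fully dictated by the matrix shape.

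The expansion along row $n+1$ produces $n+1$ terms. The term coming from the entry $\lambda - R$ in position $(n+1, n+1)$ is immediate: its $n\times n$ minor is diagonal with entries $\lambda - P_1, \ldots, \lambda - P_n$, and so it contributes $(\lambda - R)\prod_{i=1}^{n}(\lambda - P_i)$, which is the main term in the desired formula.

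For each $j \in \{1, \ldots, n\}$, I would analyze the $(n+1, j)$ minor by a second expansion. After deleting row $n+1$ and column $j$, the $j$-th row of the resulting $n\times n$ matrix contains a single nonzero entry $-Q_j$ in its rightmost column (the original diagonal entry $\lambda - P_j$ sat precisely in the column that was removed). Expanding along this single-entry row reduces the calculation to the determinant of a diagonal $(n-1)\times(n-1)$ matrix with entries $\lambda - P_i$ for $i \neq j$, which is simply $\prod_{i \neq j}(\lambda - P_i)$. Combining the two cofactor signs $(-1)^{(n+1)+j}$ and $(-1)^{j+n}$ with the entries $-\bar Q_j$ and $-Q_j$ produces exactly $-|Q_j|^{2}\prod_{i \neq j}(\lambda - P_i)$, and summing over $j$ yields the stated formula.

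There is no genuine obstacle here; the only point requiring care is bookkeeping the cascading cofactor signs, which collapse cleanly because $Q_j\bar Q_j = |Q_j|^2$ is real. An induction on $n$ (expanding along the last column of a size-$n$ arrowhead submatrix) gives an equivalent proof, but the two-step direct expansion above is the most transparent and makes the Hermitian cancellation manifest.
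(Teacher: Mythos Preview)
Your proof is correct. The paper takes a slightly different route: it argues by induction on $n$, expanding $\det(\lambda\mathbf{I}_{n+1}-H_{n+1})$ along the \emph{first column}. That expansion yields two terms: $(\lambda-P_1)$ times the characteristic polynomial of the size-$n$ arrowhead submatrix (handled by the induction hypothesis), and a second term from the entry $-\bar Q_1$ at position $(n+1,1)$, whose minor reduces directly to $(-1)^{n-1}(-Q_1)\prod_{i\geq2}(\lambda-P_i)$. Your approach, by contrast, is a one-shot double Laplace expansion along the last row and then along the single-entry row of each cofactor, avoiding induction entirely. Both arguments are standard for arrowhead matrices; yours is arguably more self-contained and symmetric in the $Q_j$'s, while the paper's inductive version makes the recursive structure of $H_{n+1}$ explicit. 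You already note the inductive alternative at the end of your proposal, so you are aware the two are interchangeable.
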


\begin{proof}
When $n=2$ or $3$, the assertion can be proved by a direct computation.

Suppose this lemma is true when the size is no greater than $n$.  When the size is $n+1$, expand $\det(\ld \bfI_{n+1} - H_{n+1})$ along the first column.
\begin{align*}
\det\begin{bmatrix}
\ld-P_1 & 0 & \cdots & 0 & -Q_1 \\  0 & \ld-P_2 & \cdots & 0 & -Q_2 \\  \vdots & \vdots & \ddots & \vdots & \vdots \\
0 & 0 & \cdots & \ld-P_{n} & -Q_{n} \\  -\bar{Q}_1 & -\bar{Q}_2 & \cdots & -\bar{Q}_{n} & \ld-R  \end{bmatrix}
&= (\ld-P_1)\cdot\det\begin{bmatrix}
\ld-P_2 & \cdots & 0 & -Q_2 \\  0 & \ddots & \vdots & \vdots \\
0 & \cdots & \ld-P_{n} & -Q_{n} \\  -\bar{Q}_2 & \cdots & -\bar{Q}_{n} & \ld-R  \end{bmatrix} \\
&\quad + (-1)^{n}(-\bar{Q}_1)\cdot\det\begin{bmatrix}
0 & \cdots & 0 & -Q_1 \\  \ld-P_2 & \cdots & 0 & -Q_2 \\  \vdots & \ddots & \vdots & \vdots \\
0 & \cdots & \ld-P_{n} & -Q_{n}  \end{bmatrix}
\end{align*}
It follows from the induction hypothesis that the first term on the right hand side is equal to
\begin{align*}
(\ld - P_1)\cdot\left[ (\ld-P_2)\cdots(\ld-P_{n})(\ld- R) - \sum_{j=2}^{n}|Q_j|^2(\ld-P_2)\cdots\widehat{(\ld-P_j)}\cdots(\ld-P_{n}) \right] ~.
\end{align*}
A direct computation on the determinant shows that the second term on the right hand side is equal to
\begin{align*}
(-1)^{n}(-\bar{Q}_1)\cdot\left[ (-1)^{n-1}(-Q_1)(\ld-P_2)\cdots(\ld-P_{n}) \right] ~.
\end{align*}
Putting these together finishes the proof of this lemma.
\end{proof}

\begin{lem}\label{quadratic_lemma}
Let $a_0$  $a_1$, $c_{n-1}$, and $c_n$ be  real numbers. Let $q(x)$ be the quadratic polynomial $q(x) = x^2 + a_1 x + a_0$ and  $F(p_1, \dots, p_n)$ be the symmetric polynomial in $p_1, \dots, p_n$ given by
\begin{align*}
    F(p_1, \dots, p_n) = \sum_{k=0}^n c_k \sigma_k
\end{align*}
with \[c_{k-1} = a_1 c_{k} - a_0 c_{k+1}, k = 1, \dots, n-1\] and $\sigma_k$ the $k$-th symmetric function in $p_1, \cdots, p_n$. Then for every $i = 1, \ldots, n$
\begin{align}\label{structure_id} (q'(p_i) F - 2 q(p_i) \partial_{p_i} F)^2 = (a_1^2-4a_0)F^2+4(c_{n-1}^2-a_1 c_{n-1}c_n+a_0 c_n^2) \prod_{j=1}^n q(p_j) ~.
\end{align}
\end{lem}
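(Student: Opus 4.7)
The strategy is to exploit the classification in Proposition~\ref{classification}. Let $r_1, r_2$ denote the roots of $r^2 - a_1 r + a_0 = 0$, so that $a_1 = r_1 + r_2$ and $a_0 = r_1 r_2$; in particular $q(x) = (x+r_1)(x+r_2)$ and $q'(x) = (x+r_1)+(x+r_2)$. Since the coefficients $c_0,\dots,c_{n-2}$ are polynomial functions of $(a_0, a_1, c_{n-1}, c_n)$ via the recurrence, both sides of \eqref{structure_id} are polynomial in the variables $(a_0, a_1, c_{n-1}, c_n, p_1, \dots, p_n)$.  It therefore suffices to prove the identity on the dense open set $\{a_1^2 - 4a_0 \neq 0\}$ where $r_1 \neq r_2$; the degenerate situations (Cases 2 and 3 of Proposition~\ref{classification}, where $a_1^2 = 4a_0$) will then follow automatically by polynomial continuity.

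On $\{a_1^2 - 4a_0 \neq 0\}$, Case 1 of Proposition~\ref{classification} supplies the explicit factorization $F = A\,M_1 + B\,M_2$, where $M_\alpha := \prod_{j=1}^n (p_j + r_\alpha)$ for $\alpha \in \{1,2\}$ and $A, B$ are the constants given there.  Writing $M_\alpha = (p_i + r_\alpha)\,M_\alpha^{(i)}$ with $M_\alpha^{(i)} := \prod_{j \neq i}(p_j + r_\alpha)$, the plan is to substitute into $q'(p_i) F - 2 q(p_i) \partial_{p_i} F$ and collect the coefficients of $M_1^{(i)}$ and $M_2^{(i)}$ separately.  The key algebraic cancellation is
\begin{align*}
    \bigl[(p_i+r_1)+(p_i+r_2)\bigr](p_i+r_\alpha) - 2(p_i+r_1)(p_i+r_2) &= (-1)^{\alpha+1}\,(p_i+r_\alpha)\,(r_1-r_2)
\end{align*}
for $\alpha = 1, 2$.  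After this cancellation the computation collapses to
\begin{align*}
    q'(p_i) F - 2 q(p_i) \partial_{p_i} F &= (r_1 - r_2)\,\bigl(A\,M_1 - B\,M_2\bigr),
\end{align*}
which is remarkably independent of $i$---as it must be, since the right-hand side of \eqref{structure_id} is symmetric in $i$.

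Finally, I would square and use the elementary identity $(A M_1 - B M_2)^2 = (A M_1 + B M_2)^2 - 4 AB\,M_1 M_2 = F^2 - 4 AB \prod_{j=1}^n q(p_j)$ to reduce matters to matching two scalar coefficients.  Vieta's formulas immediately give $(r_1 - r_2)^2 = a_1^2 - 4a_0$, producing the $F^2$ coefficient.  For the $\prod_j q(p_j)$ coefficient, substituting the explicit formulas $A = (c_{n-1} - c_n r_2)/(r_1-r_2)$ and $B = -(c_{n-1} - c_n r_1)/(r_1 - r_2)$ yields
\begin{align*}
    (r_1 - r_2)^2 A B &= -(c_{n-1} - c_n r_1)(c_{n-1} - c_n r_2) = -\bigl(c_{n-1}^2 - a_1 c_{n-1} c_n + a_0 c_n^2\bigr),
\end{align*}
which matches the stated constant up to the factor of $-4$.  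The only nontrivial step is the cancellation in the main computation; everything else is direct bookkeeping plus the density argument.
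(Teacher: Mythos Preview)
Your argument is correct.  The key cancellation you isolate,
\[
q'(p_i)F - 2q(p_i)\,\partial_{p_i}F = (r_1-r_2)\bigl(A\,M_1 - B\,M_2\bigr),
\]
is exactly the content of Case~1 in the paper's proof, and the squaring step and the identification of the scalar coefficients via Vieta's relations match the paper line for line.

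Where you differ is in the surrounding architecture.  The paper first establishes, by a direct symmetric-function computation valid for \emph{all} $(a_0,a_1)$, the intermediate identity
\[
q'(p_i)F - 2q(p_i)\,\partial_{p_i}F = -a_1 F + 2\sum_{k=0}^n c_{k-1}\sigma_k,
\]
and only afterwards invokes the classification, treating Cases~1,~2,~3 separately.  You instead go straight to the Case~1 factorization, compute there, and then dispose of the degenerate cases by the polynomial-density argument.  Your route is shorter and avoids both the intermediate identity and the repeated-root bookkeeping; the paper's route, on the other hand, yields that intermediate formula as a byproduct (it makes manifest that the left-hand side is independent of~$i$ before any case split) and keeps the argument entirely within real arithmetic, whereas your Case~1 computation implicitly allows complex $r_1,r_2,A,B$ when $a_1^2-4a_0<0$.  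Both are valid; yours is the more economical proof of the lemma as stated.
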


\begin{proof}
Note that $F$ only depends on the coefficients $a_0, \cdots, a_n$.
For the sake of the proof, we introduce a temporary constant
\begin{align*}
    c_{-1}=a_1c_0-a_0c_1
\end{align*}
which is distinct from the earlier $c_{-1}$ and  does not appear in the final formula. 

The first step is to prove that 
\begin{align}\label{structure_id_0}
q'(p_i) F-2q(p_i)\partial_{p_i} F=-a_1F+2\sum_{k=0}^n c_{k-1} \sigma_k ~.
\end{align}

 For $i=1,\ldots, n$ and $k=0,\ldots, n-1$, denote by $\sigma_k(p|i)$ the $k$-th symmetric function of $p_1, \cdots, p_n$ with $p_i$ excluded. We have 
\begin{align}\label{sigma} \sigma_k(p) = p_i \sigma_{k-1}(p|i) + \sigma_k(p|i) ~, \quad k=0,\ldots,n \end{align}
where we adopt the convention that \(\sigma_{-1}(p|i) = 0\) and \(\sigma_n(p|i) = 0\).
In particular, \(\partial_{p_i} F = \sum_{k=0}^{n} c_k \sigma_{k-1}(p|i)\).

With these, we compute that $q'(p_i) F-2q(p_i)\partial_{p_i} F$ is given by
\begin{align*} &\quad (2p_i + a_1) \sum_{k=0}^{n} c_k (p_i \sigma_{k-1}(p|i) + \sigma_k(p|i)) - 2(p_i^2 + a_1 p_i + a_0) \sum_{k=0}^{n} c_k \sigma_{k-1}(p|i)\\
&= (-a_1p_i-2a_0)\sum_{k=0}^{n} c_k  \sigma_{k-1}(p|i) + (2p_i + a_1) \sum_{k=0}^{n} c_k \sigma_k(p|i)\\
 &= (-a_1 p_i - 2a_0)\sum_{k=0}^{n-1} c_{k+1}  \sigma_k(p|i) + (2p_i + a_1)\sum_{k=0}^{n-1} c_k  \sigma_k(p|i) \\
& = \sum_{k=0}^{n-1} \Bigl[ p_i (2 c_{k} - a_1 c_{k+1})+c_k a_1 - 2 c_{k+1} a_0 \Bigr] \sigma_k(p|i)\\
& = \sum_{k=0}^{n} p_i (2 c_{k-1} - a_1 c_{k})\sigma_{k-1}(p|i)+\sum_{k=0}^{n-1}(c_k a_1 - 2 c_{k+1} a_0)\sigma_k(p|i), \end{align*}
where the indexes are shifted after the second and the fourth equalities.
With the recursive relation and the definition of $c_{-1}$, we have $c_k a_1-2c_{k+1}a_0=2c_{k-1}-a_1 c_k$ for $k=0,\cdots n-1$. Regrouping terms yields:
\begin{align*}
    \sum_{k=0}^{n} (2 c_{k-1} - a_1 c_{k})(p_i\sigma_{k-1}(p|i)+ \sigma_k(p|i)) ~.
\end{align*}
Applying \eqref{sigma}, we obtain the desired expression
\begin{align*}
    -a_1F+2\sum_{k=0}^n c_{k-1} \sigma_k(p)
\end{align*}
and complete the proof of \eqref{structure_id_0}.

We now verify the identity \eqref{structure_id} case-by-case by using Proposition~\ref{classification}.

\underline{Case 1}: \( r_1 \neq r_2 \).
Suppose \( r_1 \ne r_2 \) are distinct (real or complex conjugate) roots of the characteristic equation $r^2 - a_1 r + a_0 = 0$.  Then, \( a_1 = r_1 + r_2 \) and \( a_0 = r_1 r_2 \). By Proposition~\ref{classification}, the function \(F\) must be of the form
\begin{align}\label{case1_F}
    F = \sum_{k=0}^n (A r_1^{n-k} + B r_2^{n-k}) \sigma_k = A\,P + B\,Q,
\end{align}
where
\begin{align*}
    P = \sum_{k=0}^n r_1^{n-k} \sigma_k = \prod_{j=1}^n (p_j + r_1) \quad\text{and}\quad
    Q = \sum_{k=0}^n r_2^{n-k} \sigma_k = \prod_{j=1}^n (p_j + r_2)
\end{align*}
for some constants \( A, B \). (These constants may be complex if \( r_1 \) and \( r_2 \) are complex conjugates.)

The coefficients of \(F\) are therefore given by $c_k = A r_1^{n-k} + B r_2^{n-k}$.
Using this, we compute
\begin{align}\label{case1_other}
    \sum_{k=0}^n c_{k-1} \sigma_k = A r_1 P + B r_2 Q ~.
\end{align}
Substituting \eqref{case1_F} and \eqref{case1_other}, we find
\begin{align*}
    \biggl( -a_1 F + 2 \sum_{k=0}^n c_{k-1} \sigma_k \biggr)^2 &= (r_1 - r_2)^2 \biggl[ F^2 - 4AB \prod_{j=1}^n q(p_j) \biggr] ~,
\end{align*}
where \( q(p_j) = (p_j + r_1)(p_j + r_2) \).
Expressing \((r_1 - r_2)^2\) and \(AB\) in terms of \(a_0\), \(a_1\), \(c_{n-1}\), and \(c_n\), we obtain
\begin{align*}
    \biggl( -a_1 F + 2 \sum_{k=0}^n c_{k-1} \sigma_k \biggr)^2 &= (a_1^2 - 4a_0) F^2 + 4(c_{n-1}^2 - a_1 c_{n-1} c_n + a_0 c_n^2) \prod_{j=1}^n q(p_j) ~.
\end{align*}

\underline{Case 2}: \( r_1 = r_2 = u \neq 0 \).  Suppose the characteristic equation has a repeated root \( u \), so that \( a_1 = 2u \) and \( a_0 = u^2 \). Then \(F\) takes the form
\begin{align}\label{case2_F}
    F &= \sum_{k=0}^n \left( A + B(n-k) \right) u^{n-k} \sigma_k = A\,P + B\,Q ~,
\end{align}
where
\begin{align*}
    P = \sum_{k=0}^n u^{n-k} \sigma_k = \prod_{j=1}^n (p_j + u) \quad\text{and}\quad
    Q = \sum_{k=0}^n (n - k) u^{n-k} \sigma_k = u \frac{\dd P}{\dd u} ~.
\end{align*}

Thus, the coefficients of \(F\) are
\begin{align} \label{C_k_repeated}
    c_k = A u^{n-k} + B(n-k) u^{n-k} ~.
\end{align}
From this, we compute
\begin{align}\begin{split}\label{case2_other}
\sum_{k=0}^n c_{k-1} \sigma_k
&= \sum_{k=0}^n \Bigl[ A u^{n - k + 1} + B(n - k + 1) u^{n - k + 1} \Bigr] \sigma_k \\
&= u \sum_{k=0}^n \Bigl[ A u^{n - k} + B(n - k) u^{n - k} + B u^{n - k} \Bigr] \sigma_k \\
&= u (A + B) P + u B Q ~.
\end{split}\end{align}

Combining \eqref{case2_F} and \eqref{case2_other}, we have
\begin{align*}
    - a_1 F + 2 \sum_{k=0}^n c_{k-1} \sigma_k &= a_1 B P ~,
\end{align*}
and hence
\begin{align*}
    \Bigl( -a_1 F + 2 \sum_{k=0}^n c_{k-1} \sigma_k \Bigr)^2 = a_1^2 B^2 \prod_{j=1}^n q(p_j) ~.
\end{align*}
Since \( a_1 = 2u \),
\begin{align*}
    B &= -\frac{2}{a_1} \Bigl( c_{n-1} - \frac{a_1}{2} c_n \Bigr) ~,
\end{align*}
so the identity becomes
\begin{align*}
    \Bigl( -a_1 F + 2 \sum_{k=0}^n c_{k-1} \sigma_k \Bigr)^2 &= 4 \Bigl( c_{n-1} - \frac{a_1}{2} c_n \Bigr)^2 \prod_{j=1}^n q(p_j) ~.
\end{align*}

\underline{Case 3}: \( r_1 = r_2 = 0 \).  In this case, \( a_0 = a_1 = 0 \), and the recurrence implies \( c_k = 0 \) for all \( k < n-1 \). Therefore, $F =c_n \sigma_n+ c_{n-1} \sigma_{n-1}$, and the identity \eqref{structure_id} follows immediately by a direct substitution.
\end{proof}

\begin{bibdiv}
\begin{biblist}

\bib{CNS85}{article}{
   author={Caffarelli, L.},
   author={Nirenberg, L.},
   author={Spruck, J.},
   title={The Dirichlet problem for nonlinear second-order elliptic equations. III. Functions of the eigenvalues of the Hessian},
   journal={Acta Math.},
   volume={155},
   date={1985},
   number={3-4},
   pages={261--301},
}

\bib{CLT24}{article}{
   author={Chu, Jianchun},
   author={Lee, Man-Chun},
   author={Takahashi, Ryosuke},
   title={A Nakai-Moishezon type criterion for supercritical deformed Hermitian-Yang-Mills equation},
   journal={J. Differential Geom.},
   volume={126},
   date={2024},
   number={2},
   pages={583--632},
}

\bib{CJY20}{article}{
   author={Collins, Tristan C.},
   author={Jacob, Adam},
   author={Yau, Shing-Tung},
   title={$(1,1)$ forms with specified Lagrangian phase: a priori estimates and algebraic obstructions},
   journal={Camb. J. Math.},
   volume={8},
   date={2020},
   number={2},
   pages={407--452},
}

\bib{CS22}{article}{
   author={Collins, Tristan C.},
   author={Shi, Yun},
   title={Stability and the deformed Hermitian-Yang-Mills equation},
   book={
      series={Surv. Differ. Geom.},
      volume={24},
      publisher={Int. Press, Boston, MA},
   },
   date={2022},
   pages={1--38},
}

\bib{Collins-Xie-Yau}{article}{
   author={Collins, Tristan C.},
   author={Xie, Dan},
   author={Yau, Shing-Tung},
   title={The deformed Hermitian-Yang-Mills equation in geometry and
   physics},
   conference={
      title={Geometry and physics. Vol. I},
   },
   book={
      publisher={Oxford Univ. Press, Oxford},
   },
   date={2018},
   pages={69--90},
}

\bib{Evans82}{article}{
   author={Evans, Lawrence C.},
   title={Classical solutions of fully nonlinear, convex, second-order elliptic equations},
   journal={Comm. Pure Appl. Math.},
   volume={35},
   date={1982},
   number={3},
   pages={333--363},
}

\bib{Harvey-Lawson}{article}{
   author={Harvey, Reese},
   author={Lawson, H. Blaine, Jr.},
   title={Calibrated geometries},
   journal={Acta Math.},
   volume={148},
   date={1982},
   pages={47--157},
}

\bib{Jacob-Yau}{article}{
   author={Jacob, Adam},
   author={Yau, Shing-Tung},
   title={A special Lagrangian type equation for holomorphic line bundles},
   journal={Math. Ann.},
   volume={369},
   date={2017},
   number={1-2},
   pages={869--898},
}

\bib{Joyce}{article}{
   author={Joyce, Dominic},
   title={Constructing special Lagrangian $m$-folds in $\BC^m$ by evolving quadrics},
   journal={Math. Ann.},
   volume={320},
   date={2001},
   number={4},
   pages={757--797},
}

\bib{Krylov82}{article}{
   author={Krylov, N. V.},
   title={Boundedly inhomogeneous elliptic and parabolic equations},
   language={Russian},
   journal={Izv. Akad. Nauk SSSR Ser. Mat.},
   volume={46},
   date={1982},
   number={3},
   pages={487--523, 670},
}

\bib{Lawlor}{article}{
   author={Lawlor, Gary},
   title={The angle criterion},
   journal={Invent. Math.},
   volume={95},
   date={1989},
   number={2},
   pages={437--446},
}

\bib{Leung-Yau-Zaslow}{article}{
   author={Leung, Naichung Conan},
   author={Yau, Shing-Tung},
   author={Zaslow, Eric},
   title={From special Lagrangian to Hermitian-Yang-Mills via Fourier-Mukai
   transform},
   journal={Adv. Theor. Math. Phys.},
   volume={4},
   date={2000},
   number={6},
   pages={1319--1341},
}

\bib{Li21}{article}{
   author={Li, Caiyan},
   title={Non-polynomial entire solutions to Hessian equations},
   journal={Calc. Var. Partial Differential Equations},
   volume={60},
   date={2021},
   number={4},
   pages={Paper No. 123, 6},
}

\bib{Lin23}{article}{
   author={Lin, Chao-Ming},
   title={The deformed Hermitian-Yang-Mills equation, the Positivstellensatz, and the solvability},
   journal={Adv. Math.},
   volume={433},
   date={2023},
   pages={Paper No. 109312, 71},
}

\bib{Lin23a}{article}{
   author={Lin, Chao-Ming},
   title={On the solvability of general inverse $\sm_k$ equations},
   journal={},
   volume={},
   date={},
   number={},
   pages={},
   eprint={arXiv:2310.05339},
   status={preprint},
}

\bib{WY14}{article}{
   author={Wang, Dake},
   author={Yuan, Yu},
   title={Hessian estimates for special Lagrangian equations with critical and supercritical phases in general dimensions},
   journal={Amer. J. Math.},
   volume={136},
   date={2014},
   number={2},
   pages={481--499},
}

\bib{Warren16a}{article}{
   author={Warren, Micah},
   title={Nonpolynomial entire solutions to $\sigma_k$ equations},
   journal={Comm. Partial Differential Equations},
   volume={41},
   date={2016},
   number={5},
   pages={848--853},
}

\bib{Warren16b}{article}{
   author={Warren, Micah},
   title={A Bernstein result and counterexample for entire solutions to Donaldson's equation},
   journal={Proc. Amer. Math. Soc.},
   volume={144},
   date={2016},
   number={7},
   pages={2953--2958},
}

\bib{Yuan06}{article}{
   author={Yuan, Yu},
   title={Global solutions to special Lagrangian equations},
   journal={Proc. Amer. Math. Soc.},
   volume={134},
   date={2006},
   number={5},
   pages={1355--1358},
}

\bib{Yuan20}{article}{
   author={Yuan, Yu},
   title={Special Lagrangian equations},
   book={
      series={Progr. Math.},
      volume={333},
      publisher={Birkh\"auser/Springer, Cham},
   },
   date={2020},
   pages={521--536},
}

\end{biblist}
\end{bibdiv}

\end{document}